\renewcommand*{\S}{S_{\chi_1,\chi_2}}
\DeclareMathOperator{\smallmod}{mod}
\newcommand{\smallpmod}[1]{\ (\smallmod #1)}
\let\epsilon\varepsilon
\newcommand{\lowerparen}[2]{%
    \raisebox{-#1}{\(\displaystyle\left(\raisebox{#1}{\(\displaystyle #2\)}\right)\)}}
\newcommand{\sll}[1]{\!\! \ll_{#1} \!\!}
\newcommand{\sgg}[1]{\! \gg_{#1} \!}
\newtheorem{theorem}{Theorem}[section]
\newtheorem{lemma}[theorem]{Lemma}
\newtheorem{proposition}[theorem]{Proposition}
\newtheorem{corollary}[theorem]{Corollary}
\theoremstyle{definition}
\newtheorem*{remark}{Remark}
\title{An Average of Generalized Dedekind Sums}
\author{Travis Dillon\footnote{Lawrence University, \url{travisdillon01@gmail.com}} \ and Stephanie Gaston\footnote{California State University Dominguez Hills, \url{srgaston8294@gmail.com}}}
\date{\daymonthyear\today}
\begin{document}
\maketitle

\begin{abstract}
We study a generalization of the classical Dedekind sum that incorporates two Dirichlet characters and develop properties that generalize those of the classical Dedekind sum. By calculating the Fourier transform of this generalized Dedekind sum, we obtain an explicit formula for its second moment. Finally, we derive upper and lower bounds for the second moment with nearly identical orders of magnitude.
\end{abstract}

\section{Introduction}

Let $h$ and $k$ be coprime integers with $k>0$. The classical Dedekind sum is defined as
\[ s(h,k) = \sum_{j \smallmod k} B_1\!\left(\frac{j}{k}\right) B_1\!\left(\frac{jh}{k}\right), \]
where $B_1(x)$ is the first Bernoulli function
\[ B_1(x) = \begin{cases}
0 &\text{if } x\in\mathbb{Z}\\
x - \lfloor x \rfloor - \frac{1}{2} &\text{otherwise.} 
\end{cases} \]

Dedekind first studied this sum because of its connection to the transformation properties of the Dedekind eta function. Since then, Dedekind sums have been studied extensively in a wide variety of contexts, including modular forms, topology, combinatorial geometry, and quadratic reciprocity. 

Averages of Dedekind sums have attracted considerable attention. In \cite{walum-L-series}, Walum found an exact formula for the second moment of the classical Dedekind sum:
\begin{equation}\label{equation:walum-result}
 \sum_{a \smallmod p} \lvert s(a,p) \rvert ^2
 = \frac{p^2}{\pi^4(p-1)} \sum_{\substack{\psi \smallmod p \\ \psi(-1) = -1}} \lvert L(1,\psi)\rvert^4.
\end{equation}
Following this, Conrey, Fransen, Klein, and Scott \cite{conrey} and Zhang \cite{zhang-1996} studied the asymptotics of the second and higher moments.

Numerous authors have generalized the classical Dedekind sum in a variety of ways. In \cite{berndt1973}, Berndt develops a generalization of the classical Dedekind sum that incorporates a Dirichlet character and higher Bernoulli functions. Da\u{g}l\i{} and Can \cite{dagli-can-2015} generalized Berndt's sum to include two characters, and Stucker, Vennos, and Young \cite{last-year-dedekind} recently showed how to derive this generalized Dedekind sum using newform Eisenstein series. In this paper, we study the sum from \cite{dagli-can-2015} and \cite{last-year-dedekind}.  We follow the notation of \cite{last-year-dedekind}.

Let $\chi_1$ and $\chi_2$ be nontrivial primitive characters with moduli $q_1$ and $q_2$, respectively, such that $\chi_1\chi_2(-1) = 1$. For $\gamma = \left(\begin{smallmatrix} a & b \\ c & d\end{smallmatrix}\right) \in \Gamma_0(q_1q_2)$ with $c\geq 1$, the generalized Dedekind sum associated to $\chi_1$ and $\chi_2$ is given by
\begin{equation}\label{dedekind-sum-finite-formula}
\S(\gamma) = 
\sum_{j\smallmod c}\ \sum_{n \smallmod q_1}
\overline{\chi_2}(j)\overline{\chi_1}(n)
B_1\!\left(\frac{j}{c}\right) B_1\!\left(\frac{n}{q_1} + \frac{aj}{c}\right) \raisebox{-.2cm}{.}
\end{equation}
Because the generalized Dedekind sum depends only on the entries in the first column of $\gamma$, it will at times be convenient to write $\S(a,c)$ in place of $\S(\gamma)$. Additionally, note that the generalized Dedekind sum depends only on the residue of $a$ modulo $c$.

We will often make use of an alternate form of the generalized Dedekind sum. Let $e(x) = \exp(2\pi i x)$ and $\tau(\chi,l)$ denote the Gauss sum $\sum_{n \smallmod q} \chi(n)e(nl/q)$. We write $\tau(\chi)$ for $\tau(\chi,1)$. For any character $\chi$ modulo $q$, define the first generalized Bernoulli function as 
\begin{equation}\label{equation:B1chi-definition}
B_{1,\chi}(x) = \frac{-\tau(\overline{\chi})}{2\pi i}\sum_{l \neq 0} \frac{\chi(l)}{l}e\!\left(\frac{lx}{q}\right) \raisebox{-.2cm}{.}
\end{equation}
This equation unifies the cases appearing in Definition 1 of \cite{Euler-mac}. Theorem 3.1 of \cite{Euler-mac} states a finite sum formula for $B_{1,\chi}$ for primitive characters $\chi$:
\begin{equation}\label{equation:finite-sum-B1chi}
B_{1,\chi}(x) = \sum_{r \smallmod q} \overline{\chi}(r)B_1\!\left(\frac{x+r}{q}\right) \raisebox{-.2cm}{.}
\end{equation}
Substituting (\ref{equation:finite-sum-B1chi}) into (\ref{dedekind-sum-finite-formula}) gives the alternate formula 
\begin{equation}\label{equation:SwithB1chi}
\S(\gamma) =  \sum_{j \smallmod c} \overline{\chi_2}(j) B_1\!\left(\frac{j}{c}\right) B_{1,\chi_1}\!\left(\frac{aj}{c/q_1}\right) \raisebox{-.2cm}{.}
\end{equation}

Lemma 2.2 of \cite{last-year-dedekind} shows that the generalized Dedekind sum $\S(\gamma)$ is a crossed homomorphism from $\Gamma_0(q_1q_2)$ into $\mathbb{C}$. Set $\chi(\gamma) = \chi(d)$ for $\gamma = \left(\begin{smallmatrix} a & b \\ c & d \end{smallmatrix}\right)$, and let $\gamma_1,\gamma_2 \in \Gamma_0(q_1q_2)$. Then
\begin{equation}\label{equation:crossed-homomorphism}
\S(\gamma_1\gamma_2) = \S(\gamma_1) + \chi_1\overline{\chi_2}(\gamma_1)\S(\gamma_2).
\end{equation}
If $\chi_1 = \chi_2 = \chi$, then $S_{\chi,\chi}(\gamma)$ is a homomorphism from $\Gamma_0(q_1q_2)$ into $\mathbb{C}$.

Our main result is an exact formula for the second moment of $\S(\gamma)$, which generalizes the result of Walum in \cite{walum-L-series}.  We use $\chi^\star$ to denote the primitive character that induces a given character $\chi$ and $q(\chi)$ to denote its conductor.
\begin{theorem}\label{thm:second-moment}
Let $\chi_1$ and $\chi_2$ be nontrivial primitive characters modulo $q_1$ and $q_2$, respectively, such that $\chi_1\chi_2(-1) = 1$, and let $q_1q_2 \mid c$. Then
\begin{equation}\label{equation:second-moment}
\sum_{\substack{a \smallmod c \\ (a,c) = 1}} \lvert\S(a,c)\rvert^2 =
\frac{\varphi(c)}{\pi^4} \sum_{\substack{\psi \smallmod c \\ \psi\chi_1(-1)=-1}}
\lvert L(1,\overline{\psi}\vphantom{\psi}^\star\chi_1)\rvert^2 \lvert L(1,(\psi\chi_2)^\star)\rvert^2 
\vert g_{\chi_1,\chi_2}(\psi;c)\rvert^2,
\end{equation}
where
\begin{equation}\label{equation:definition-g}
g_{\chi_1,\chi_2}(\psi;c) = \tau((\overline{\psi\chi_2})^\star) \tau(\psi^\star) \tau(\overline{\chi_1})
\sum_{\substack{d \mid c \\ d \equiv 0\smallmod q(\psi)}}
    \frac{\overline{\chi_2}(c/d)}{\varphi(d)}
    ((\overline{\psi\chi_2})^\star\mu \ast 1)(d)\ 
    (\chi_1 \ast \mu\psi^\star)\!\left(\frac{d}{q(\psi)}\right) \raisebox{-.2cm}{.}
\end{equation}
\end{theorem}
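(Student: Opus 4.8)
\emph{Proof sketch.} The plan is to follow Walum's derivation of \eqref{equation:walum-result}, replacing his reliance on primitivity and prime modulus by a more careful analysis of Gauss sums of imprimitive characters. The first step is to Fourier expand both Bernoulli functions in \eqref{dedekind-sum-finite-formula}: write $B_1(j/c) = -(2\pi i)^{-1}\sum_{m\neq 0}e(mj/c)/m$ and, since the argument of the second factor is $n/q_1 + aj/c$, expand $B_1(n/q_1+aj/c) = -(2\pi i)^{-1}\sum_{l\neq 0}e(ln/q_1)e(laj/c)/l$. The sum over $n\smallmod q_1$ then collapses to $\chi_1(l)\tau(\overline{\chi_1})$ by primitivity of $\chi_1$, leaving (with the usual care about conditional convergence)
\[
\S(a,c) = \frac{\tau(\overline{\chi_1})}{(2\pi i)^2}\sum_{l\neq 0}\sum_{m\neq 0}\frac{\chi_1(l)}{lm}\sum_{j\smallmod c}\overline{\chi_2}(j)\,e\!\left(\frac{(m+la)j}{c}\right) \raisebox{-.2cm}{.}
\]

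Next I would expand $\S(\cdot,c)$ as a function on $(\mathbb{Z}/c\mathbb{Z})^\times$ into Dirichlet characters: writing $\S(a,c) = \sum_{\psi\smallmod c}\widehat\S(\psi)\,\overline{\psi}(a)$, orthogonality gives $\sum_{(a,c)=1}|\S(a,c)|^2 = \varphi(c)\sum_{\psi\smallmod c}|\widehat\S(\psi)|^2$, so the task reduces to computing $\widehat\S(\psi) = \varphi(c)^{-1}\sum_{(a,c)=1}\S(a,c)\psi(a)$. Multiplying the display above by $\psi(a)$ and summing over $a\smallmod c$ turns the $a$-dependence $e(laj/c)$ into $\tau(\psi;lj) := \sum_{a\smallmod c}\psi(a)e(alj/c)$. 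Applying the standard evaluation of a Gauss sum of an imprimitive character extracts the primitive Gauss sum $\tau(\psi^\star)$, a factor $\overline{\psi^\star}$ of a reduced version of $lj$, and a Möbius/Euler correction together with a divisibility constraint relating $l$, $j$, and $c/q(\psi)$; this constraint is the seed of the divisor condition $d\mid c$, $d\equiv 0\smallmod q(\psi)$ in \eqref{equation:definition-g}.

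The remaining sum over $j\smallmod c$ is then a second Gauss sum, now of $\overline{\chi_2}$ twisted by the $\overline{\psi^\star}$-factor; evaluating it primitively produces $\tau\big((\overline{\psi\chi_2})^\star\big)$ and a further divisibility condition. What is left are the sums $\sum_{l\neq 0}\chi_1\overline{\psi}(l)/l$ and $\sum_{m\neq 0}\chi_2\psi(m)/m$, each of the form $(1-\chi(-1))L(1,\chi) = 2L(1,\chi^\star)\prod_{p\mid c}(1-\chi^\star(p)/p)$ or $0$ according to the parity of $\chi$; since $\chi_1\chi_2(-1)=1$ forces $\chi_2(-1)=\chi_1(-1)$, both $\chi_1\overline\psi$ and $\chi_2\psi$ are odd precisely when $\psi\chi_1(-1)=-1$, which is exactly the condition in \eqref{equation:second-moment}. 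Assembling the three Gauss sums $\tau(\overline{\chi_1})$, $\tau(\psi^\star)$, $\tau\big((\overline{\psi\chi_2})^\star\big)$, the constant $(2\pi i)^{-2}$ with the two factors of $2$ from the parity computation, the finite Euler products, the $\varphi$-weights and Möbius factors from the two imprimitive Gauss sums, and the accumulated divisibility data, all of the non-$L$ material reorganizes into $g_{\chi_1,\chi_2}(\psi;c)$, yielding $\widehat\S(\psi) = -\pi^{-2}\,g_{\chi_1,\chi_2}(\psi;c)\,L(1,\overline{\psi}\vphantom{\psi}^\star\chi_1)\,L(1,(\psi\chi_2)^\star)$ (the sign being immaterial below). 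Plugging this into $\sum_{(a,c)=1}|\S(a,c)|^2 = \varphi(c)\sum_\psi|\widehat\S(\psi)|^2$ and using $\overline{L(1,\chi)} = L(1,\overline{\chi})$ gives \eqref{equation:second-moment}.

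\textbf{Main obstacle.} The delicate part is tracking conductors and divisibility conditions through the two successive imprimitive-Gauss-sum evaluations and verifying that the resulting tangle of $\varphi$-weights, $\mu$-factors, finite Euler products, and constraints collapses \emph{exactly} into the convolution-and-divisor-sum \eqref{equation:definition-g} — in particular that the local factors reorganize as $((\overline{\psi\chi_2})^\star\mu\ast 1)(d)$ and $(\chi_1\ast\mu\psi^\star)(d/q(\psi))$ with the weight $\overline{\chi_2}(c/d)/\varphi(d)$. I expect this to require a prime-by-prime (multiplicative) analysis of each factor, and identifying the precise form of $g$ is the crux of the proof.
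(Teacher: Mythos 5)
Your top-level architecture agrees with the paper's: both compute the Fourier transform of $a \mapsto \S(a,c)$ on $(\mathbb{Z}/c\mathbb{Z})^\times$, apply Parseval, and obtain the condition $\psi\chi_1(-1)=-1$ from the parity of the two $L$-series (your parity computation is correct). But the execution diverges in the middle, and your sketch stops exactly where the real work lies. The paper does not expand both Bernoulli functions into doubly infinite Fourier series. Instead it uses the finite form \eqref{equation:SwithB1chi}, writes the transform as a sum over $r,t \smallmod c$ with $t \equiv ar \smallpmod{c}$, and detects that congruence with the orthogonality relation of Lemma \ref{thm:character-sum-congruence}: a sum over divisors $d \mid c$ and characters $\psi$ modulo $d$ evaluated at $r/(c/d)$ and $t/(c/d)$. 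This is the key device you are missing. It cleanly separates $a$, $r$, and $t$ into three independent single sums; the $a$-sum forces $\psi^\star = \xi^\star$, and the $r$- and $t$-sums are evaluated by Lemmas \ref{thm:sum-characters_B1} and \ref{thm:sum-character_B1chi}, which is where the convolutions $((\overline{\psi\chi_2})^\star\mu\ast 1)(d)$ and $(\chi_1\ast\mu\psi^\star)(d/q(\psi))$ come from. Crucially, the divisor sum over $d \mid c$, $q(\psi) \mid d$ in \eqref{equation:definition-g} is literally the outer sum of Lemma \ref{thm:character-sum-congruence}, not something to be reconstructed from divisibility constraints of imprimitive Gauss sums. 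Only one imprimitive Gauss sum evaluation (Lemma \ref{thm:gauss-nonprimitve}, inside Lemma \ref{thm:sum-character_B1chi}) is ever needed.

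On your chosen route there are two concrete obstacles you have not resolved. First, after summing $\psi(a)e(laj/c)$ over $a$ you get $\tau(\psi,lj)$, and the imprimitive evaluation produces a factor $\overline{\psi}\vphantom{\psi}^\star(lj/k)$ in which $l$ and $j$ are entangled; your next step, ``the remaining sum over $j$ is a second Gauss sum of $\overline{\chi_2}$ twisted by the $\overline{\psi^\star}$-factor,'' does not go through as stated until you have split $k$ between $l$ and $j$, which is precisely the prime-by-prime analysis you defer. Second, the double series over $l,m\neq 0$ is only conditionally convergent, and the interchanges with the $j$- and $a$-sums need justification (the paper confines the infinite expansion to the single-variable setting of Lemma \ref{thm:sum-character_B1chi}, where it is harmless). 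So while nothing in your plan is wrong in principle, the step you yourself identify as the crux — that the accumulated local factors collapse exactly into \eqref{equation:definition-g} — is asserted rather than proved, and the missing idea that makes it tractable is the divisor-character orthogonality used to separate variables before any Gauss sums appear.
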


We also bound the second moment above and below.
\begin{theorem}\label{thm:dedekind-sum-bounds}
Let $\chi_1$ and $\chi_2$ be nontrivial primitive characters modulo $q_1$ and $q_2$, respectively, such that $\chi_1\chi_2(-1) = 1$, and let $q_1q_2 \mid c$. Then
\[
\sum_{\substack{a \smallmod c \\ (a,c) = 1}} \lvert \S(a,c)\rvert^2 = q_1 c^{2+o(1)}.
\]
\end{theorem}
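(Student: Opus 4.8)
The plan is to derive both inequalities directly from the exact formula \eqref{equation:second-moment} of Theorem~\ref{thm:second-moment}, estimating its right-hand side by elementary means. The ingredients I would use are standard: $|\tau(\chi^\star)|^2=q(\chi)$; $|L(1,\chi)|\ll\log(2q(\chi))$ for nonprincipal $\chi$, together with Siegel's lower bound $|L(1,\chi)|\gg_\epsilon q(\chi)^{-\epsilon}$ (the sole ineffective input); $\varphi(c)=c^{1+o(1)}$; and the fact that a Dirichlet convolution of boundedly many $1$-bounded multiplicative functions is $\ll_\epsilon n^\epsilon$, so the two arithmetic factors appearing in \eqref{equation:definition-g} are $\ll_\epsilon c^\epsilon$. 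Since $q_1q_2\mid c$, every character in \eqref{equation:second-moment}--\eqref{equation:definition-g} has conductor dividing $c$; moreover the parity constraint $\psi\chi_1(-1)=-1$ excludes $\psi\in\{\chi_1,\overline{\chi_2}\}$, so both $L$-factors there sit at nonprincipal characters and are $c^{\pm o(1)}$. The one structural remark I would single out is that $\overline{\chi_2}(c/d)$ vanishes unless $d$ is a multiple of $c_2^\ast:=\prod_{p\mid q_2}p^{v_p(c)}$, the $q_2$-part of $c$.

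\emph{Upper bound.} Fix $\psi\smallmod c$ and put $D=\lcm(q(\psi),c_2^\ast)$. Every nonvanishing term of the sum in \eqref{equation:definition-g} has $D\mid d$, hence $1/\varphi(d)\ll c^{o(1)}/D$; the remaining factors and the number of divisors $d$ of $c$ contribute $\ll_\epsilon c^\epsilon$. Because $q(\psi)\mid D$ and $q(\psi\chi_2)\mid\lcm(q(\psi),q_2)\mid D$, the Gauss-sum prefactor obeys $|\tau((\overline{\psi\chi_2})^\star)\tau(\psi^\star)\tau(\overline{\chi_1})|=\sqrt{q(\psi\chi_2)\,q(\psi)\,q_1}\le D\sqrt{q_1}$, which cancels the $1/D$. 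Thus $|g_{\chi_1,\chi_2}(\psi;c)|\ll\sqrt{q_1}\,c^{o(1)}$ uniformly in $\psi$, and since there are $\varphi(c)$ characters modulo $c$,
\[
\sum_{\substack{a \smallmod c \\ (a,c) = 1}}|\S(a,c)|^2\ll\frac{\varphi(c)}{\pi^4}\cdot c^{o(1)}\cdot\varphi(c)\cdot q_1\ll q_1\,c^{2+o(1)} .
\]

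\emph{Lower bound.} All terms on the right of \eqref{equation:second-moment} are nonnegative, so I would bound from below the contribution of a large subfamily. Assume first $c\not\equiv2\pmod4$. For $\psi$ \emph{primitive} modulo $c$, the condition $q(\psi)\mid d\mid c$ forces $d=c$, collapsing \eqref{equation:definition-g} to a single term:
\[
|g_{\chi_1,\chi_2}(\psi;c)|^2=\frac{q_1\,c\,q(\psi\chi_2)}{\varphi(c)^2}\,\Bigl|\prod_{p\mid c}\bigl(1-(\overline{\psi\chi_2})^\star(p)\bigr)\Bigr|^2 .
\]
The crucial point is that $q(\psi\chi_2)$ can be kept within a bounded factor of $c$: as $\psi_p$ ranges over primitive characters modulo $p^{v_p(c)}$ one can make $\psi_p\chi_{2,p}$ primitive of full modulus except possibly at $p\in\{2,3\}$, where any obstruction costs a loss dividing $12$; hence $M:=\max_\psi q(\psi\chi_2)\ge c/12$. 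For a positive proportion of the primitive $\psi$ achieving $q(\psi\chi_2)=M$, the at most two surviving factors $1-(\overline{\psi\chi_2})^\star(p)$ (those with $p\mid c$, $p\nmid M$) are $\gg1$, so $|g_{\chi_1,\chi_2}(\psi;c)|^2\gg q_1(c/\varphi(c))^2\gg q_1$. Imposing also the parity condition $\psi\chi_1(-1)=-1$ costs at most a factor $2$, and — since it forces nonprincipality — makes both $L$-factors $\gg_\epsilon c^{-\epsilon}$ by Siegel; one is still left with $\gg c^{1-o(1)}$ admissible $\psi$. Summing their contributions,
\[
\sum_{\substack{a \smallmod c \\ (a,c) = 1}}|\S(a,c)|^2\gg\frac{\varphi(c)}{\pi^4}\cdot c^{1-o(1)}\cdot c^{-o(1)}\cdot q_1\gg q_1\,c^{2-o(1)} .
\]
When $c\equiv2\pmod4$ I would run the same argument with the modulus $c/2$, which is the largest conductor available and is $\asymp c$; there $g_{\chi_1,\chi_2}(\psi;c)$ is a sum of two terms, and one again uses the freedom in $\psi$ to keep it from collapsing. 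Together with the upper bound this proves the theorem.

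\emph{Expected obstacle.} The upper bound is quick once the divisibility $c_2^\ast\mid d$ is observed. The substance is in the lower bound: (i) the local verification that the conductor of $\psi\chi_2$ drops by at most a universal factor (a computation at $p=2$ and $p=3$), and (ii) a mild equidistribution statement ensuring that, for a positive proportion of $\psi$, none of the finitely many factors $1-(\overline{\psi\chi_2})^\star(p)$ is abnormally small. Both are routine in spirit but need care. The use of Siegel's theorem is what renders the rate of convergence in the exponent ineffective, and appears unavoidable.
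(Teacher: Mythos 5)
Your upper bound is correct and in fact takes a slightly different route from the paper's: you extract the divisibility $c_2^\ast\mid d$ from the factor $\overline{\chi_2}(c/d)$ and play $\varphi(d)^{-1}$ off against the Gauss-sum prefactor via $D=\lcm(q(\psi),c_2^\ast)$ to get the \emph{uniform} bound $\lvert g_{\chi_1,\chi_2}(\psi;c)\rvert\ll \sqrt{q_1}\,c^{o(1)}$. The paper instead settles for $\lvert g_{\chi_1,\chi_2}(\psi;c)\rvert^2\ll_\epsilon q_1c^{1+\epsilon}/q(\psi)$ and then sums $\sum_{\psi\smallmod c}q(\psi)^{-1}\ll_\epsilon c^\epsilon$ by grouping characters by conductor. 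Both arguments are sound and give the same result; yours is pointwise sharper, the paper's needs one less observation about $\chi_2$.

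The lower bound is where there is a genuine gap. Your skeleton agrees with the paper's (restrict to primitive $\psi$, collapse (\ref{equation:definition-g}) to the single term $d=c$, choose $\psi$ prime by prime, treat $p=2,3$ exceptionally, invoke Siegel), but the step you defer to ``a mild equidistribution statement'' is the crux, not a routine afterthought. When the conductor of $\psi\chi_2$ drops at $p\in\{2,3\}$, the surviving factor $1-(\overline{\psi\chi_2})^\star(p)$ is $1-\zeta$ for a root of unity $\zeta$ of order dividing $\varphi(q(\psi\chi_2))$; it can equal $0$ and, when nonzero, can be as small as $\asymp 1/c$, which is fatal for a $c^{2-o(1)}$ bound. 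To rescue this you must show that $(\psi(2),\psi(3))$ equidistributes \emph{within} the already-restricted family (primitive $\psi'$ of prescribed parity with $\psi'\xi'$ primitive), a family whose density can be well below $1/2$, so a crude average-versus-maximum argument over all characters does not suffice; one needs twisted versions of the primitive-character sum (\ref{equation:prim-char-sum-lemma}) with their own degenerate congruence cases (e.g.\ small multiplicative order of $2$ or $3$, or $c'=1$). The paper sidesteps all of this with a cleaner device: rather than letting $q(\psi\chi_2)$ lose the prime $p$ entirely, it restricts to $\psi$ with $q(\psi_p\xi_p)\geq c_p/p$, so that $p$ still divides $q(\psi\chi_2)$, whence $(\psi\chi_2)^\star(p)=0$ and the corresponding factor in $\sum_{k\mid c}\mu(k)(\overline{\psi\chi_2})^\star(k)$ is exactly $1$; Lemma \ref{thm:primitive-cond-count} then counts $\gg_\epsilon c^{1-\epsilon}$ such $\psi$ of the required parity. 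Adopting that restriction (or supplying the equidistribution argument in full, including the degenerate cases) is what your proof still needs.
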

\noindent
Since (\ref{dedekind-sum-finite-formula}) has $cq_1$ terms, each of which is bounded by $1$ in absolute value, Theorem \ref{thm:dedekind-sum-bounds} is consistent with ``square-root'' cancellation.

The lower bound of Theorem \ref{thm:dedekind-sum-bounds} implies the following corollary, which may be interpreted as a statement about the kernel of the crossed homomorphism $\S(\gamma)$.
\begin{corollary}\label{thm:S-is-nonzero}
Let $\chi_1$ and $\chi_2$ be nontrivial primitive characters modulo $q_1$ and $q_2$, respectively, such that $\chi_1\chi_2(-1) = 1$. Then for each positive $c \equiv 0 \smallmod q_1q_2$, there exists an integer $a$ coprime to $c$ such that $S_{\chi_1,\chi_2}(a,c)$ is nonzero.
\end{corollary}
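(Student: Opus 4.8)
The plan is to read the corollary off from the positivity of the second moment. The left-hand side of~(\ref{equation:second-moment}) is a sum over $a$ coprime to $c$ of the nonnegative real numbers $\lvert\S(a,c)\rvert^2$; if this sum is strictly positive, then at least one summand is positive, and the corresponding $a$---automatically coprime to $c$---satisfies $\S(a,c)\neq 0$. So it suffices to prove that $\sum_{(a,c)=1}\lvert\S(a,c)\rvert^2>0$ for every positive $c\equiv 0\smallpmod{q_1q_2}$. The lower bound of Theorem~\ref{thm:dedekind-sum-bounds} gives $\sum_{(a,c)=1}\lvert\S(a,c)\rvert^2\gg q_1c^{2-o(1)}$, which is positive for $c$ large; the only point requiring attention is that the argument behind that bound be \emph{effective}, so that it furnishes a nonzero lower bound for every admissible $c$ and not just asymptotically. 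Tracking constants through the proof of Theorem~\ref{thm:dedekind-sum-bounds} takes care of this, and the corollary follows at once.

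For a self-contained argument uniform in $c$, I would instead extract positivity directly from the exact formula of Theorem~\ref{thm:second-moment}: it suffices to exhibit one character $\psi$ modulo $c$ with $\psi\chi_1(-1)=-1$ whose term in~(\ref{equation:second-moment}) is nonzero. An admissible $\psi$ exists because $(\mathbb{Z}/c\mathbb{Z})^\times$ carries characters of both parities (we have $c\geq q_1q_2\geq 9>2$), so one may impose $\psi(-1)=-\chi_1(-1)$. For any admissible $\psi$, neither $\overline{\psi^\star}\chi_1$ nor $(\psi\chi_2)^\star$ is principal: were $\overline{\psi^\star}\chi_1$ principal we would have $\psi^\star=\chi_1$ and hence $\psi\chi_1(-1)=\chi_1(-1)^2=1$, contradicting admissibility, and $(\psi\chi_2)^\star$ principal would force $\psi^\star=\overline{\chi_2}$ and $\psi\chi_1(-1)=\chi_1\chi_2(-1)=1$. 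Hence the two $L$-values in~(\ref{equation:second-moment}) are finite and nonzero by the classical nonvanishing of Dirichlet $L$-functions at $s=1$, and the three Gauss sums in~(\ref{equation:definition-g}) are nonzero since a primitive Gauss sum has absolute value the square root of its conductor. Therefore the $\psi$-term of~(\ref{equation:second-moment}) vanishes exactly when the divisor sum in~(\ref{equation:definition-g}) does.

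The remaining---and I expect the only genuinely delicate---step is to pick an admissible $\psi$ for which that divisor sum is nonzero. When $c\not\equiv 2\smallpmod 4$ one can take $\psi$ primitive modulo $c$ of the required parity; then $q(\psi)=c$, only $d=c$ meets the divisor condition, and the sum collapses, via $(\chi_1\ast\mu\psi^\star)(1)=1$, to $\varphi(c)^{-1}\bigl((\overline{\psi\chi_2})^\star\mu\ast 1\bigr)(c)=\varphi(c)^{-1}\prod_{p\mid c}\bigl(1-(\overline{\psi\chi_2})^\star(p)\bigr)$, so it is enough to choose $\psi$ with $(\overline{\psi\chi_2})^\star(p)\neq 1$ for every prime $p\mid c$, which the abundance of primitive characters modulo $c$ allows. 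The case $c\equiv 2\smallpmod 4$---possible only when $q_1$ and $q_2$ are both odd---calls instead for a $\psi$ of conductor $c/2$ selected on the same principle, namely so that the relevant multiplicative convolution avoids the zeros coming from the primes dividing $c$; this bookkeeping over those primes is where care is needed. Since the proof of Theorem~\ref{thm:dedekind-sum-bounds} must already isolate such a $\psi$ to obtain its lower bound, the cleanest write-up of the corollary simply invokes that analysis.
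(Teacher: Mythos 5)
Your first paragraph is precisely the paper's argument: the corollary is read off from the strict positivity of the second moment, which the lower bound of Theorem~\ref{thm:dedekind-sum-bounds} guarantees (with a positive implied constant) for every positive $c \equiv 0 \smallpmod{q_1q_2}$, so some summand $\lvert\S(a,c)\rvert^2$ is nonzero. One small correction: effectiveness is a red herring here --- the Siegel bound used in that proof is famously ineffective, but an ineffective positive constant is still positive, so no constant-tracking is needed, and your alternative second argument via Theorem~\ref{thm:second-moment} is unnecessary.
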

\noindent
In particular, the crossed homomorphism $\S(\gamma)$ is always nontrivial. In contrast, the only homomorphism from $\Gamma_0(1)=SL_2(\mathbb{Z})$ into $\mathbb{C}$ is trivial, since the abelianization of $SL_2(\mathbb{Z})$ is finite (for a nice proof of this well-known fact, see Keith Conrad's notes\footnote{\url{https://kconrad.math.uconn.edu/blurbs/grouptheory/SL(2,Z).pdf}}).

In Section \ref{section:properties}, we establish a few basic properties of the generalized Dedekind sum. These properties are extensions of corresponding properties of the classical Dedekind sum, but it appears that they have yet to be documented in the literature. One such property writes the generalized Dedekind sum as a cotangent sum: 
\begin{proposition}\label{thm:dedekind-sum-cotangent}
Let $\chi_1$ and $\chi_2$ be characters modulo $q_1$ and $q_2$, respectively. Then for each positive $c \equiv 0 \smallpmod{q_1q_2}$, 
\[ 
\S(\gamma)
= -\frac{1}{4cq_2} \sideset{}{'}\sum_{s \smallmod c }\ \, \sideset{}{'}\sum_{r \smallmod q_2}
    \cot\!\left(\frac{r\pi}{c}\right)\cot\!\left(\frac{s\pi}{c}\right)  
    \tau(\overline{\chi_1},s) \,\tau\left(\overline{\chi_2},\frac{(r+as)q_2}{c}\right), \]
where the primes indicate omission of the terms for which $\cot$ is undefined.
\end{proposition}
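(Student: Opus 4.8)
The plan is to substitute the finite cotangent expansion of the first Bernoulli function into the defining formula~(\ref{dedekind-sum-finite-formula}) and then evaluate the resulting character sums. (Working from~(\ref{dedekind-sum-finite-formula}) directly, rather than from~(\ref{equation:SwithB1chi}), avoids needing $\chi_1$ or $\chi_2$ to be primitive, in line with the hypotheses of the proposition.) The expansion I have in mind is that for every positive integer $N$ and every integer $m$,
\[
B_1\!\left(\frac{m}{N}\right) = \frac{i}{2N}\sum_{\substack{\ell \smallmod N \\ \ell \not\equiv 0}}\cot\!\left(\frac{\pi \ell}{N}\right)e\!\left(\frac{m\ell}{N}\right),
\]
which follows by computing the discrete Fourier coefficients of $a \mapsto B_1(a/N)$ on $\mathbb{Z}/N\mathbb{Z}$; these evaluate to $\tfrac{i}{2N}\cot(\pi\ell/N)$ for $\ell \not\equiv 0$ after using $\tfrac{1}{1-e(-\ell/N)} = \tfrac12 - \tfrac{i}{2}\cot(\pi\ell/N)$, and the identity still holds when $N \mid m$ because then both sides vanish (using $\sum_{\ell \not\equiv 0}\cot(\pi\ell/N)=0$). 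Since $q_1q_2 \mid c$, both $c/q_1$ and $c/q_2$ are integers, so the argument of the second Bernoulli factor in~(\ref{dedekind-sum-finite-formula}) may be written over the common denominator $c$ as $\bigl(n(c/q_1)+aj\bigr)/c$. I then apply the expansion to $B_1(j/c)$, introducing a Fourier variable $r$, and to $B_1\bigl((n(c/q_1)+aj)/c\bigr)$, introducing a Fourier variable $s$.

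After interchanging the finitely many sums, the exponential attached to the second factor splits as $e\bigl((n(c/q_1)+aj)s/c\bigr) = e(ns/q_1)\,e(ajs/c)$, so the sum over $n\smallmod q_1$ collapses to $\sum_n \overline{\chi_1}(n)e(ns/q_1) = \tau(\overline{\chi_1},s)$, while the total phase carried by $j$ becomes $e\bigl(j(r+as)/c\bigr)$. It remains to evaluate $\sum_{j\smallmod c}\overline{\chi_2}(j)\,e\bigl(j(r+as)/c\bigr)$; since $\chi_2$ has modulus $q_2$ and $q_2 \mid c$, writing $j = v + q_2 u$ with $v$ running modulo $q_2$ and $u$ running modulo $c/q_2$ factors this sum, the $u$-sum contributing $c/q_2$ when $(c/q_2)\mid(r+as)$ and $0$ otherwise, and the $v$-sum contributing $\tau\bigl(\overline{\chi_2},(r+as)q_2/c\bigr)$. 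Gathering the constants, $\tfrac{i}{2c}\cdot\tfrac{i}{2c}\cdot\tfrac{c}{q_2} = -\tfrac{1}{4cq_2}$, and we obtain
\[
\S(\gamma) = -\frac{1}{4cq_2}\sum_{\substack{s\smallmod c,\ s\not\equiv 0 \\ r\smallmod c,\ r\not\equiv 0 \\ (c/q_2)\,\mid\,(r+as)}}\cot\!\left(\frac{\pi r}{c}\right)\cot\!\left(\frac{\pi s}{c}\right)\tau(\overline{\chi_1},s)\,\tau\!\left(\overline{\chi_2},\frac{(r+as)q_2}{c}\right).
\]

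The final step is to reconcile this with the statement. The residues $r$ modulo $c$ satisfying $(c/q_2)\mid(r+as)$ form a single coset of the subgroup $(c/q_2)(\mathbb{Z}/c\mathbb{Z})$, which has order $q_2$; running $r$ over this $q_2$-element index set is what the notation $\sideset{}{'}\sum_{r\smallmod q_2}$ records, and the only residue in it at which $\cot(\pi r/c)$ could be undefined is $r\equiv 0$, which occurs precisely when $(c/q_2)\mid as$, so the prime accounts for it, exactly as the $s\equiv 0$ term is dropped. I expect the work here to be entirely bookkeeping rather than substance: keeping track of which Fourier variable is dual to which Bernoulli factor, performing the mod-$q_2$ splitting of the $j$-sum correctly, and matching the degenerate terms to the primed-summation convention. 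Because every sum appearing is finite, there are no convergence or interchange issues to address.
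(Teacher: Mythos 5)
Your proposal is correct and follows essentially the same route as the paper: substitute the finite cotangent expansion of $B_1$ into (\ref{dedekind-sum-finite-formula}), collapse the $n$-sum to $\tau(\overline{\chi_1},s)$, split the $j$-sum modulo $q_2$ to extract the condition $(c/q_2)\mid(r+as)$ and the factor $(c/q_2)\,\tau(\overline{\chi_2},(r+as)q_2/c)$, and collect constants. The only (cosmetic) difference is at the end, where the paper performs the explicit change of variables $r\to -as+rc/q_2$ while you describe the surviving $r$ as a $q_2$-element coset.
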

\noindent
Section \ref{section:properties} is independent of the following sections, as the proofs of Theorems \ref{thm:second-moment} and \ref{thm:dedekind-sum-bounds} do not depend upon its results.

Further work in this area could include sharper asymptotics of the second moment, along the lines of Zhang \cite{zhang-1996}, or asymptotics of higher moments, following Conrey, Fransen, Klein, and Scott \cite{conrey}.


\section{Properties of the generalized Dedekind sum}\label{section:properties}

\subsection{Arithmetic properties}
Throughout this section, let $\chi_1$ and $\chi_2$ be characters modulo $q_1$ and $q_2$, respectively. The original definition for the generalized Dedekind sum requires $\chi_1$ and $\chi_2$ to be nontrivial and primitive characters such that $\chi_1\chi_2(-1) = 1$. However, we can extend the definition to any pair of characters via the finite sum formula (\ref{dedekind-sum-finite-formula}). With this extension, we can recover the classical Dedekind sum by taking $\chi_1$ and $\chi_2$ to be trivial. Consequently, all of the properties proved in this section subsume the analogous properties for the classical Dedekind sum.

We may similarly extend the definition of the generalized Dedekind sum to all $a \in \mathbb{Z}$ and $c \equiv 0 \smallpmod{q_1q_2}$ through (\ref{dedekind-sum-finite-formula}). As mentioned in \cite{conrey}, it is not hard to prove that $s(h,k) = s(\alpha h, \alpha k)$ for all positive integers $\alpha$, so this extension adds no new features in the classical case. The same is true in the general case, due to the following:

\begin{proposition} \label{prop:dedekind-with-scaler}
Let $a$ and $c$ be coprime integers with $q_1q_2 \mid c$. Then $\S(a,c) = \S(\alpha a, \alpha c)$ for all positive integers $\alpha$.
\end{proposition}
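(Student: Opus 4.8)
The plan is to work directly from the finite-sum definition (\ref{dedekind-sum-finite-formula}) and exploit the periodicity of $B_1$. Write out
\[
\S(\alpha a, \alpha c) = \sum_{j \smallmod \alpha c}\ \sum_{n \smallmod q_1} \overline{\chi_2}(j)\,\overline{\chi_1}(n)\, B_1\!\left(\frac{j}{\alpha c}\right) B_1\!\left(\frac{n}{q_1} + \frac{\alpha a j}{\alpha c}\right),
\]
and simplify $\alpha a j/(\alpha c) = aj/c$ inside the second Bernoulli factor. The natural move is to split the index $j$ modulo $\alpha c$ according to its residue modulo $c$: write $j = j_0 + c t$ with $j_0$ running over residues mod $c$ and $t$ running over $0,1,\dots,\alpha-1$. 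Since $q_1 q_2 \mid c$, we have $\overline{\chi_2}(j) = \overline{\chi_2}(j_0)$, and the second Bernoulli factor becomes $B_1\!\left(\frac{n}{q_1} + \frac{a j_0}{c} + a t\right) = B_1\!\left(\frac{n}{q_1} + \frac{a j_0}{c}\right)$ because $at \in \mathbb{Z}$ and $B_1$ is $1$-periodic. Thus the entire summand depends on $t$ only through $B_1\!\left(\frac{j_0 + ct}{\alpha c}\right) = B_1\!\left(\frac{j_0/c + t}{\alpha}\right)$.

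The crux is then the identity $\sum_{t=0}^{\alpha-1} B_1\!\left(\frac{x+t}{\alpha}\right) = B_1(x)$, the classical distribution (Raabe-type) relation for the first Bernoulli function, valid for all real $x$ — including the case $x \in \mathbb{Z}$, where both sides vanish (one checks that if $x$ is an integer then exactly the $t$ with $\alpha \mid x+t$ contributes a value of $0$, while the remaining terms pair up symmetrically to cancel). Applying this with $x = j_0/c$ collapses the $t$-sum and leaves exactly $\S(a,c)$. One should state and cite (or prove in a line) this distribution identity for $B_1$; it is standard but worth recording since the paper has extended $B_1$ to integer arguments by fiat.

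I do not expect a serious obstacle here: the only mild subtlety is handling the integer case of the distribution relation carefully, since $B_1$ has a jump discontinuity and is defined to be $0$ on $\mathbb{Z}$, so one cannot simply invoke a smooth-function argument. Everything else is bookkeeping with the periodicity of $\chi_2$ (using $q_2 \mid c$) and of $B_1$. An alternative, essentially equivalent, route would be to use the form (\ref{equation:SwithB1chi}) and the corresponding distribution property of $B_{1,\chi_1}$, but the direct computation from (\ref{dedekind-sum-finite-formula}) is cleaner and makes the role of the hypothesis $q_1 q_2 \mid c$ transparent.
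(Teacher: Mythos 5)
Your argument is correct and is essentially the same as the paper's (the paper only sketches it): the same decomposition $j \to j + c l$ with $l$ running modulo $\alpha$, the same use of the periodicity of $\overline{\chi_2}$ and $B_1$, and the same appeal to the distribution relation $\sum_{t=0}^{\alpha-1} B_1\!\left(\frac{x+t}{\alpha}\right) = B_1(x)$ to collapse the extra sum. Your care with the integer case of that relation, where $B_1$ is defined to vanish, is a worthwhile detail the paper leaves implicit.
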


\begin{proposition} \label{prop:dedekind-all-zero}
If $\chi_1\chi_2(-1) = -1$, then $\S(\gamma) = 0$ for all $\gamma \in \Gamma_0(q_1q_2)$.
\end{proposition}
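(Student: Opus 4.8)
The plan is to exploit the evident $j \mapsto -j$, $n \mapsto -n$ symmetry in the defining formula (\ref{dedekind-sum-finite-formula}), using that $B_1$ is an odd function on all of $\mathbb{R}$. We may assume $c \geq 1$, as in the original definition. Writing $\gamma = \left(\begin{smallmatrix} a & b \\ c & d \end{smallmatrix}\right)$, so that
\[ \S(\gamma) = \sum_{j \smallmod c}\ \sum_{n \smallmod q_1} \overline{\chi_2}(j)\overline{\chi_1}(n)\, B_1\!\left(\tfrac{j}{c}\right) B_1\!\left(\tfrac{n}{q_1} + \tfrac{aj}{c}\right), \]
I would re-index this double sum by replacing $j$ with $-j$ and $n$ with $-n$, which is legitimate because $j \mapsto -j$ is a bijection on residues modulo $c$ and $n \mapsto -n$ is a bijection on residues modulo $q_1$, and because $B_1(j/c)$ and $B_1(n/q_1+aj/c)$ depend only on $j \bmod c$ and $n \bmod q_1$.

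Under this substitution, $\overline{\chi_2}(j) \mapsto \overline{\chi_2}(-1)\overline{\chi_2}(j)$ and $\overline{\chi_1}(n) \mapsto \overline{\chi_1}(-1)\overline{\chi_1}(n)$, while the two Bernoulli factors become $B_1(-j/c) = -B_1(j/c)$ and $B_1\!\left(-\tfrac{n}{q_1}-\tfrac{aj}{c}\right) = -B_1\!\left(\tfrac{n}{q_1}+\tfrac{aj}{c}\right)$ by oddness of $B_1$. Hence every term of the sum is multiplied by $\overline{\chi_1}(-1)\,\overline{\chi_2}(-1)\cdot(-1)\cdot(-1) = \overline{\chi_1\chi_2}(-1)$, so that $\S(\gamma) = \overline{\chi_1\chi_2}(-1)\,\S(\gamma)$. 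Since $\chi_1\chi_2(-1) \in \{\pm 1\}$ is real, the factor $\overline{\chi_1\chi_2}(-1)$ equals $\chi_1\chi_2(-1)$; thus $\S(\gamma) = \chi_1\chi_2(-1)\,\S(\gamma)$, and when $\chi_1\chi_2(-1) = -1$ this reads $\S(\gamma) = -\S(\gamma)$, forcing $\S(\gamma) = 0$.

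I do not anticipate any genuine obstacle: the whole argument is a one-line parity computation. The only points requiring a sentence of care are the verification that $B_1(-x) = -B_1(x)$ holds even at integer $x$ (where both sides are $0$) and the remark that $\chi_1\chi_2(-1)$ is its own complex conjugate. As an alternative, one could run the same reflection argument starting from formula (\ref{equation:SwithB1chi}), using that the generalized Bernoulli function $B_{1,\chi}$ has parity $-\chi(-1)$ (immediate from (\ref{equation:B1chi-definition}) upon sending $l \mapsto -l$); this route is slightly less direct but leads to the same identity $\S(\gamma) = \chi_1\chi_2(-1)\,\S(\gamma)$ and hence the same conclusion.
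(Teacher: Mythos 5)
Your proof is correct and is exactly the argument the paper gives: the authors also prove this proposition by substituting $j \to -j$ and $n \to -n$ in (\ref{dedekind-sum-finite-formula}) and using that $B_1(-x) = -B_1(x)$, yielding $\S(\gamma) = \chi_1\chi_2(-1)\,\S(\gamma)$ and hence $\S(\gamma) = 0$ when $\chi_1\chi_2(-1) = -1$.
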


We prove Proposition \ref{prop:dedekind-with-scaler} by rewriting the sum over $j$ in (\ref{dedekind-sum-finite-formula}) as
\[ \sum_{j \smallmod \alpha c} \longrightarrow \sum_{l \smallmod \alpha}\, \sum_{j \smallmod c} \]
and change variables $j \to j + c l$. Proposition \ref{prop:dedekind-all-zero} is proven by changing variables $j \to -j$ and $n\to -n$ in (\ref{dedekind-sum-finite-formula}) and using that $B_1(-x) = -B_1(x)$.

We also present two arithmetic properties of the generalized Dedekind sum. For the remainder of this section, we assume that $\chi_1\chi_2(-1) = 1$.

\begin{proposition}\label{prop:(-a,c)to(a,c)}
Let $c \geq 1$ and $q_1q_2 \mid c$. Then $\S(-a,c) = -\chi_2(-1) \S(a,c)$.
\end{proposition}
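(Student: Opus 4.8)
The plan is to work directly from the finite-sum formula (\ref{dedekind-sum-finite-formula}) and exploit the oddness of $B_1$. Replacing $a$ by $-a$ changes nothing except the second Bernoulli factor, whose argument becomes $\frac{n}{q_1} - \frac{aj}{c}$, so that
\[
\S(-a,c) = \sum_{j \smallmod c}\ \sum_{n \smallmod q_1} \overline{\chi_2}(j)\,\overline{\chi_1}(n)\, B_1\!\left(\frac{j}{c}\right) B_1\!\left(\frac{n}{q_1} - \frac{aj}{c}\right).
\]
The key move is to perform the \emph{single} change of variables $j \to -j$ in the outer sum over $j \smallmod c$, leaving the inner variable $n$ untouched. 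Since $j \mapsto -j$ permutes the residues modulo $c$, this reindexing is legitimate and does not change the value of the sum.

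Under this substitution I would track three effects. First, the character factor picks up $\overline{\chi_2}(-j) = \chi_2(-1)\,\overline{\chi_2}(j)$, using $\overline{\chi_2}(-1) = \chi_2(-1)$ since the value is $\pm1$. Second, the first Bernoulli factor contributes a sign, $B_1(-j/c) = -B_1(j/c)$, by oddness. Third — and this is the point that makes the $j$-substitution work cleanly — the argument of the second Bernoulli factor becomes $\frac{n}{q_1} - \frac{a(-j)}{c} = \frac{n}{q_1} + \frac{aj}{c}$, which is exactly the argument appearing in $\S(a,c)$, with \emph{no} accompanying sign change. Collecting the constant $\chi_2(-1)$ from the character and the factor $-1$ from $B_1(j/c)$, every summand equals $-\chi_2(-1)$ times the corresponding summand of $\S(a,c)$, giving $\S(-a,c) = -\chi_2(-1)\,\S(a,c)$ directly.

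The only real subtlety, and the point at which the naive approach fails, is the decision to substitute in $j$ alone. The double substitution $j \to -j$, $n \to -n$ is tempting because it is precisely the manipulation used to prove Proposition \ref{prop:dedekind-all-zero}; but here it merely produces the combined character factor $\chi_1(-1)\chi_2(-1) = 1$ together with two cancelling sign changes from $B_1$, and so reproduces the same sum verbatim — a tautology that relates $\S(-a,c)$ to nothing. Substituting in $j$ alone is what isolates the single factor $-\chi_2(-1)$ demanded by the statement. (As a consistency check, one could instead substitute $n \to -n$ alone: this yields $-\chi_1(-1)\,\S(a,c)$, which agrees with the claim after noting that the standing hypothesis $\chi_1\chi_2(-1) = 1$ forces $\chi_1(-1) = \chi_2(-1)$ since each is $\pm1$. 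The $j$-substitution is preferable because it delivers the stated factor $-\chi_2(-1)$ without invoking this hypothesis.)
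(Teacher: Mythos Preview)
Your proof is correct and matches the paper's approach exactly: the paper's entire argument is the one-line remark that Proposition~\ref{prop:(-a,c)to(a,c)} ``can be proved by changing variables $j\to -j$ in the finite sum formula~(\ref{dedekind-sum-finite-formula}),'' which is precisely what you carry out in detail. Your additional commentary on why the single substitution succeeds where the double substitution $j\to -j$, $n\to -n$ would be tautological is a nice touch, but the core argument is identical.
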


\begin{proposition}\label{prop:a-to-inverse}
Let $c \geq 1$ and $q_1q_2 \mid c$. Further, suppose that $a\overline{a} \equiv 1 \smallpmod{c}$. Then $S_{\chi_1,\chi_2}(\overline{a},c) = \chi_1(-a)\overline{\chi_2}(a)S_{\chi_1,\chi_2}(a,c)$.
\end{proposition}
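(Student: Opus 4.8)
The plan is to work directly from the finite-sum formula~(\ref{dedekind-sum-finite-formula}) for $\S(a,c)$ and transform it into $\S(\overline{a},c)$ by two successive changes of summation variable. The key observation is that the two Bernoulli factors $B_1(j/c)$ and $B_1(n/q_1 + aj/c)$ are coupled only through the integer $aj$, so substituting in the \emph{coupled} variable is what swaps the roles of $a$ and $\overline{a}$; a plain substitution $j \mapsto aj$, by contrast, merely returns $\S(a,c)$ to itself. Write $c' = c/q_1$, so that $n/q_1 + aj/c = (aj + nc')/c$. Note that $q_1q_2 \mid c$ forces $q_2 \mid c'$, and that $a\overline{a} \equiv 1 \smallpmod{c}$ forces $(a,c) = 1$, hence also $(a,q_1) = 1$.

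\emph{First substitution.} For each fixed $n$, replace the summation variable $j$ by $k \equiv aj + nc' \smallpmod{c}$, which is a bijection of $\mathbb{Z}/c$ since $(a,c)=1$. Then $B_1\bigl((aj+nc')/c\bigr) = B_1(k/c)$ and $j \equiv \overline{a}(k - nc') \smallpmod{c}$. Because $q_2 \mid c'$, we get $j \equiv \overline{a}k \smallpmod{q_2}$, so $\overline{\chi_2}(j) = \chi_2(a)\overline{\chi_2}(k)$; and by the periodicity of $B_1$, $B_1(j/c) = B_1\bigl(\overline{a}k/c - \overline{a}n/q_1\bigr)$. Thus
\[ \S(a,c) = \chi_2(a)\sum_{k \smallmod c}\ \sum_{n \smallmod q_1} \overline{\chi_2}(k)\,\overline{\chi_1}(n)\, B_1\!\left(\frac{\overline{a}k}{c} - \frac{\overline{a}n}{q_1}\right) B_1\!\left(\frac{k}{c}\right). \]

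\emph{Second substitution.} In the inner sum replace $n$ by $-an \smallpmod{q_1}$, a bijection since $(a,q_1)=1$. Since $a\overline{a} \equiv 1 \smallpmod{c}$ and $q_1 \mid c$, the quantity $\overline{a}a\,n/q_1$ differs from $n/q_1$ by an integer, so $B_1(\overline{a}k/c - \overline{a}(-an)/q_1) = B_1(\overline{a}k/c + n/q_1)$; also $\overline{\chi_1}(-an) = \chi_1(-1)\overline{\chi_1}(a)\overline{\chi_1}(n)$. After reordering the product of the two Bernoulli factors, the remaining double sum is exactly~(\ref{dedekind-sum-finite-formula}) with $a$ replaced by $\overline{a}$, so $\S(a,c) = \chi_2(a)\chi_1(-1)\overline{\chi_1}(a)\,\S(\overline{a},c)$. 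The prefactor is a root of unity, so inverting it by conjugation and using $\overline{\chi_1(-1)} = \chi_1(-1)$ together with $\chi_1(-1)\chi_1(a) = \chi_1(-a)$ yields $\S(\overline{a},c) = \chi_1(-a)\overline{\chi_2}(a)\,\S(a,c)$, as claimed.

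The conceptual heart of the argument is choosing the correct first substitution; everything afterward is bookkeeping, and the only points requiring care are checking that each modular reduction used to simplify $\overline{\chi_2}(j)$ and $\overline{\chi_1}(-an)$ is legitimate---this is precisely where $q_2 \mid c'$ and $(a,q_1)=1$ enter---and tracking the character and sign factors so that they collapse correctly at the end. (An alternative derivation runs through the crossed-homomorphism relation~(\ref{equation:crossed-homomorphism}) applied to $\gamma$ and $-\gamma^{-1}$, both of which lie in $\Gamma_0(q_1q_2)$ with positive lower-left entry, together with Proposition~\ref{prop:(-a,c)to(a,c)} and the fact that $\S(-I)=0$; but the direct computation above seems cleanest.)
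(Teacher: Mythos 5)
Your proof is correct and follows essentially the same route as the paper, which proves the identity by substituting $j \to a(j - nc/q_1)$ and then $n \to -\overline{a}n$ in (\ref{dedekind-sum-finite-formula}); your substitutions $k \equiv aj + nc'$ and $n \to -an$ are just the inverse changes of variable applied starting from $\S(a,c)$ instead of $\S(\overline{a},c)$, followed by inverting the resulting unit prefactor. The bookkeeping of the character factors and the use of $q_2 \mid c/q_1$ and $a\overline{a}\equiv 1 \pmod{q_1}$ are all handled correctly.
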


Proposition \ref{prop:(-a,c)to(a,c)} can be proved by changing variables $j\to -j$ in the the finite sum formula (\ref{dedekind-sum-finite-formula}). To prove Proposition \ref{prop:a-to-inverse}, we substitute $j \rightarrow a(j-nc/q_1)$, then $n \to -\overline{a}n$ in (\ref{dedekind-sum-finite-formula}).  If $\gamma = \left(\begin{smallmatrix} a & b \\ c & d \end{smallmatrix}\right) \in \Gamma_0(q_1q_2)$, then $d \equiv \overline{a} \smallpmod{c}$, so Proposition \ref{prop:a-to-inverse} may be rewritten as $\S(d,c) = \chi_1(-1)\chi_2(d)\S(a,c)$.

The classical Dedekind sum can be written in a variety of forms. Using that
\[ \sum_{j \smallmod k} B_1\!\left(\frac{hj}{k}\right) = 0, \]
Apostol \cite[p. 61]{apostol-book} shows that 
\[ s(h,k) = \sum_{j = 1}^{k-1} \frac{j}{k}\, B_1\!\left(\frac{hj}{k}\right), \]
eliminating one of the Bernoulli functions. Analogously, substituting $j\to c-j$ and $n \to -n$ shows that
\[ \sum_{j \smallmod c}\ \sum_{n \smallmod q_1} \overline{\chi_2}(j)\overline{\chi_1}(n) B_1\left(\frac{n}{q_1} + \frac{aj}{c}\right) = 0. \]
Therefore, the generalized Dedekind sum can be written as
\begin{equation}
\S(\gamma) = 
\sum_{j=1}^{c-1}\ \sum_{n \smallmod q_1} \frac{j}{c}\, \overline{\chi_2}(j)\overline{\chi_1}(n) B_1\left(\frac{n}{q_1} + \frac{aj}{c}\right) \raisebox{-.2cm}{.}
\end{equation}

The classical Dedekind sum can also be written as a cotangent sum, as in equation 26 of \cite{dedekind-sums-book} and Exercise 11, Chapter 3 of \cite{apostol-book}:
\[ s(h,k) = \frac{1}{4k} \sum_{j = 1}^{k-1} \cot\!\left(\frac{\pi j}{k}\right) \cot\!\left(\frac{\pi hj}{k}\right) \raisebox{-.2cm}{.} \]
Proposition \ref{thm:dedekind-sum-cotangent} gives the corresponding result for the generalized Dedekind sum.

\subsection{Proof of Proposition \ref{thm:dedekind-sum-cotangent}}
Rademacher and Grosswald \cite[p.~14]{dedekind-sums-book} prove that
\[ B_1\!\left(\frac{j}{c}\right) = \sideset{}{'}\sum_{r \smallmod c} \left(\frac{ e(r/c) }{1-e(r/c)} + \frac{1}{2}\right)e\!\left(\frac{rj}{c}\right) \raisebox{-.2cm}{.} \]
With a bit of manipulation, this transforms into
\begin{equation}\label{equation:B1-as-cotangent}
B_1\!\left(\frac{j}{c}\right) = \frac{i}{2c}\ \sideset{}{'}\sum_{r \smallmod c} \cot\!\left(\frac{\pi r}{c}\right) e\!\left(\frac{rj}{c}\right) \raisebox{-.2cm}{.}
\end{equation}
Substituting (\ref{equation:B1-as-cotangent}) into (\ref{dedekind-sum-finite-formula}) gives
\begin{equation}\label{equation:cot-substitution}
\S(\gamma)
= -\frac{1}{4c^2}
\sideset{}{'}\sum_{r,s \smallmod c} \cot\!\left(\frac{r\pi}{c}\right) \cot\!\left(\frac{s\pi}{c}\right)
 \sum_{n \smallmod q_1} \overline{\chi_1}(n)  e\!\left(\frac{sn}{q_1} \right)
\sum_{j \smallmod c}\overline{\chi_2}(j)e\!\left(\frac{j(r+as)}{c}\right) \raisebox{-.2cm}{.}
\end{equation}
We substitute $j \to j + l q_2$ in the sum over $j$ to get
\begin{equation}\label{equation:cot-sum-over-j}
\sum_{j \smallmod c}\overline{\chi_2}(j)e\!\left(\frac{j(r+as)}{c}\right)
= \sum_{j \smallmod q_2}
    \overline{\chi_2}(j) e\!\left(\frac{j(r+as)}{c}\right)
    \sum_{l \smallmod c/q_2}
        e\!\left(\frac{l (r+as)}{c/q_2}\right) \raisebox{-.2cm}{.}
\end{equation}
From the othogonality relations for additive characters, the sum over $l$ is $c/q_2$ if $r+as \equiv 0$ (mod $c/q_2$) and vanishes otherwise. If $c/q_2$ divides $r+as$, then (\ref{equation:cot-sum-over-j}) simplifies as 
\begin{equation}\label{equation:cot-sum-over-l}
\frac{c}{q_2} \,\tau\left(\overline{\chi_2},\frac{(r+as)q_2}{c}\right) \raisebox{-.2cm}{.}
\end{equation}

The sum over $n$ in (\ref{equation:cot-substitution}) is the Gauss sum $\tau(\overline{\chi_1},s)$. Substituting the Gauss sums into (\ref{equation:cot-substitution}) gives
\[ 
\S(\gamma)
= -\frac{1}{4cq_2} \sideset{}{'}\sum_{\substack{r,s \smallmod c \\ r + as \equiv 0 \smallmod c/q_2}}
    \cot\!\left(\frac{r\pi}{c}\right)\cot\!\left(\frac{s\pi}{c}\right)  
    \tau(\overline{\chi_1},s) \,\tau\left(\overline{\chi_2},\frac{(r+as)q_2}{c}\right), \]
and changing variables $r\to -as+rc/q_2$ finishes the proof.  

A special case occurs when $\chi_1$ and $\chi_2$ are primitive. Then the Gauss sums simplify as $\tau(\overline{\chi},s) = \tau(\overline{\chi})\chi(s)$, and the formula in Proposition \ref{thm:dedekind-sum-cotangent} becomes
\[ \S(\gamma) = -\frac{\tau(\overline{\chi_1})\tau(\overline{\chi_2})}{4c q_2} \sideset{}{'}\sum_{s \smallmod c}\ \, \sideset{}{'}\sum_{ r \smallmod q_2} \chi_1(s) \chi_2(r) \cot\!\left(\pi\!\left(\frac{ r}{q_2}-\frac{as}{c}\right)\right) \cot\!\left(\frac{\pi s}{c}\right) \raisebox{-.2cm}{.} \]


\section{Proof of Theorem \ref{thm:second-moment}}

We prove Theorem \ref{thm:second-moment} by evaluating the Fourier transform of $\S$. Our approach will loosely follow that of Walum in \cite{walum-L-series}. Before we begin, we establish a character relation (Lemma \ref{thm:character-sum-congruence}) and evaluate three finite sums (Lemmas \ref{thm:sum-characters_B1}, \ref{thm:sum-character_B1chi}, and \ref{thm:sum-character_rel_prime}). 

Throughout this section, $\chi_1$ and $\chi_2$ are nontrivial primitive characters modulo $q_1$ and $q_2$, respectively, such that $\chi_1\chi_2(-1) = 1$.  We denote the principal character modulo $q$ by $\chi_{0,q}$.
Finally, we extend every Dirichlet character $\psi$ to all of $\mathbb{R}$ by setting $\psi(x)=0$ if $x\not\in\mathbb{Z}$.

\subsection{Preliminary lemmas}

\begin{lemma}\label{thm:character-sum-congruence}
For any $c>0$,
\[
\sum_{d \mid c} \frac{1}{\varphi(d)}\sum_{\psi \smallmod d} \psi\left(\frac{m}{c/d}\right)\overline{\psi}\left(\frac{n}{c/d}\right) = \begin{cases} 1 &\text{if } m \equiv n \pmod c\\
0 &\text{if } m\not\equiv n \pmod{c}.
\end{cases}
\]
\end{lemma}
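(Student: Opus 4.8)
The plan is to evaluate the inner sum over $\psi$ using the orthogonality relations for Dirichlet characters modulo $d$, then reindex the outer sum and reduce to an elementary divisor count.

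First I would fix $c$ and a divisor $d \mid c$, and consider $\sum_{\psi \bmod d} \psi(m/(c/d))\overline{\psi}(n/(c/d))$. Write $a = m/(c/d)$ and $b = n/(c/d)$. The convention that $\psi$ vanishes off the integers, combined with the fact that every character modulo $d$ is supported on residues coprime to $d$, means that $\sum_{\psi \bmod d}\psi(a)\overline{\psi}(b)$ equals $\varphi(d)$ precisely when $a$ and $b$ are integers with $\gcd(a,d)=1$ and $a \equiv b \pmod d$, and equals $0$ otherwise. In the present situation this says that the inner sum is $\varphi(d)$ exactly when $(c/d) \mid m$, $(c/d)\mid n$, $\gcd(m/(c/d),\,d)=1$, and $m/(c/d) \equiv n/(c/d) \pmod d$, and is $0$ otherwise. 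Dividing by $\varphi(d)$, the left-hand side of the lemma becomes the number of divisors $d \mid c$ meeting all four conditions.

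Next I would substitute $e = c/d$, so that $e$ runs over divisors of $c$ with $d = c/e$. The conditions transform into $e \mid m$, $e \mid n$, $\gcd(m/e,\,c/e)=1$, and $c/e \mid (m-n)/e$; the last of these is equivalent to $c \mid m-n$. Hence if $c \nmid m-n$, no $e$ qualifies and the sum vanishes, as claimed. If $c \mid m - n$, then $e \mid m$ together with $e \mid c \mid m-n$ forces $e \mid n$ automatically, and the congruence $m/e \equiv n/e \pmod{c/e}$ then also holds, so the count collapses to $\#\{\, e : e \mid \gcd(m,c),\ \gcd(m/e,\,c/e)=1 \,\}$.

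Finally I would observe that this count is exactly $1$: setting $g = \gcd(m,c)$, the value $e = g$ works since $\gcd(m/g,\,c/g)=1$ by the definition of the gcd, while any proper divisor $e$ of $g$ fails because $g/e > 1$ divides both $m/e$ and $c/e$. This yields the value $1$ when $c \mid m-n$, completing the proof. I do not expect a substantive obstacle; the only delicate point is the bookkeeping around the zero-extension of $\psi$ — applying the orthogonality relation with the correct coprimality and integrality side conditions — and verifying the equivalence of the congruence condition with $c \mid m - n$ after the reindexing.
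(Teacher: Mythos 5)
Your proof is correct and takes essentially the same route as the paper's: orthogonality of characters modulo $d$ shows the inner sum is $\varphi(d)$ precisely when $m\equiv n\pmod c$ and $c/d=\gcd(m,c)$, and vanishes otherwise, so exactly one divisor contributes. You simply make explicit (via the reindexing $e=c/d$ and the divisor count) the bookkeeping that the paper's two-sentence proof leaves implicit.
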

\begin{proof}
Suppose that $m \equiv n$ (mod $c$) and $(m,c) = k$. The sum over $\psi$ is $0$ unless $c/d = k$, in which case it is $\varphi(d)$. On the other hand, if $m\not\equiv n$ (mod $c$), then the inner sum always vanishes.
\end{proof}

\begin{lemma}\label{thm:sum-characters_B1}
If $\chi$ is a character modulo $d \mid c$, then
\[ \sum_{r \smallmod c} \chi(r) B_1\!\left(\frac{r}{c}\right) = -\frac{\tau(\chi^\star)}{\pi i} (\chi^\star\mu \ast 1)(d)
\begin{cases}
L(1,\overline{\chi}^\star) &\text{if } \chi(-1)=-1\\
0 &\text{if } \chi(-1)=1.
\end{cases}
\]
\end{lemma}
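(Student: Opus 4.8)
The plan is to expand $B_1(r/c)$ using its Fourier-type expansion and then collapse the resulting double sum via orthogonality of additive characters. Recall from equation (\ref{equation:B1chi-definition}) with the trivial character, or more directly from the Fourier series of $B_1$, that
\[
B_1\!\left(\frac{r}{c}\right) = -\frac{1}{2\pi i}\sum_{l\neq 0}\frac{1}{l}\,e\!\left(\frac{lr}{c}\right).
\]
Substituting this into the sum and interchanging the order of summation gives
\[
\sum_{r\smallmod c}\chi(r)B_1\!\left(\frac{r}{c}\right)
= -\frac{1}{2\pi i}\sum_{l\neq 0}\frac{1}{l}\sum_{r\smallmod c}\chi(r)\,e\!\left(\frac{lr}{c}\right).
\]
Here $\chi$ has modulus $d$, so $\chi(r)$ depends only on $r\bmod d$; writing $r = r_0 + d t$ with $r_0\smallmod d$ and $t\smallmod c/d$, the inner sum factors and the sum over $t$ forces $l\equiv 0\smallpmod{c/d}$, contributing a factor $c/d$ and leaving $\sum_{r_0\smallmod d}\chi(r_0)e(lr_0/c)$. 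The main bookkeeping step is to pass from $\chi$ to the inducing primitive character $\chi^\star$ modulo $q(\chi)$: the standard identity $\sum_{r\smallmod d}\chi(r)e(mr/d) = \tau(\chi^\star)\,\overline{\chi^\star}(m)\,(\chi^\star\mu\ast 1)(d)$ (valid for any integer $m$, where the last factor accounts for the imprimitivity) converts the inner $r_0$-sum into $\tau(\chi^\star)$ times $\overline{\chi^\star}$ evaluated at the dual variable times the $\mu\ast 1$ correction.

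After these reductions one is left with a sum of the shape $\sum_{l'\neq 0}\frac{1}{l'}\overline{\chi^\star}(l')$ (after relabeling $l = (c/d)\cdot(\text{something})$ and tracking how the argument of $\overline{\chi^\star}$ depends on $l'$), which is precisely $\sum_{l'\geq 1}\frac{\overline{\chi^\star}(l')}{l'} + \sum_{l'\geq 1}\frac{\overline{\chi^\star}(-l')}{l'} = (1+\overline{\chi^\star}(-1))L(1,\overline{\chi}^\star)$. Since $\overline{\chi^\star}(-1)=\chi(-1)$, this vanishes when $\chi(-1)=1$ and equals $2L(1,\overline{\chi}^\star)$ when $\chi(-1)=-1$; the factor $2$ cancels the $2$ in $2\pi i$, producing the stated $-\frac{\tau(\chi^\star)}{\pi i}(\chi^\star\mu\ast 1)(d)L(1,\overline{\chi}^\star)$. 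The main obstacle—really the only delicate point—is keeping the imprimitivity factor $(\chi^\star\mu\ast 1)(d)$ and the various Gauss-sum normalizations straight when descending from modulus $d$ to the conductor $q(\chi)$; one must be careful that the sum over $r_0\smallmod d$ of $\chi(r_0)e(mr_0/c)$ with $(c/d)\mid l$ genuinely produces $\tau(\chi^\star)\overline{\chi^\star}(\cdot)$ times the arithmetic factor, rather than an expression that only simplifies when $\chi$ is itself primitive. A clean way to handle this is to first prove the primitive case (where $(\chi^\star\mu\ast 1)(d)=1$ and the computation is immediate) and then reduce the general case to it by writing $\chi = \chi^\star\cdot\chi_{0,d}$ and summing over the extra divisor structure, which is exactly where the multiplicative function $\chi^\star\mu\ast 1$ appears.
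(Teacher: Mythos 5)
Your overall strategy---expand $B_1(r/c)$ into its Fourier series, reduce the inner sum to a Gauss sum of the (possibly imprimitive) character $\chi$, and recognize the surviving $l$-sum as $2L(1,\overline{\chi}^\star)$ or $0$ according to parity---is viable and genuinely different from the paper's. The paper strips the imprimitivity first, writing $\chi(r)=\chi^\star(r)\sum_{k\mid(d,r)}\mu(k)$, which produces the factor $(\chi^\star\mu\ast 1)(d)$ immediately and reduces everything to the elementary finite evaluation $\sum_{t\smallmod q(\chi)}\chi^\star(t)\,t/q(\chi)=B_{1,\overline{\chi}^\star}(0)$; the $L$-value enters only at the very end via (\ref{equation:B1chi-definition}). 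Your route front-loads the analysis and back-loads the arithmetic; the paper's does the reverse, and your own suggested fallback (prove the primitive case, then sum over the divisor structure of $\chi_{0,d}$) is essentially the paper's argument.

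The concrete problem is that the identity you lean on is false as stated: for an imprimitive $\chi$ modulo $d$ induced by $\chi^\star$ modulo $q$, the Gauss sum $\tau(\chi,m)=\sum_{r\smallmod d}\chi(r)e(mr/d)$ is \emph{not} of the form $\tau(\chi^\star)\,\overline{\chi^\star}(m)$ times an $m$-independent constant once $\gcd(m,d)>1$. For instance, with $\chi$ modulo $6$ induced by the quadratic character modulo $3$, one computes $\tau(\chi,2)=i\sqrt{3}=\tau(\chi^\star)$, whereas your formula predicts $-2i\sqrt{3}$. The correct statement is the paper's Lemma \ref{thm:gauss-nonprimitve}, in which $\tau(\chi,m)$ is a sum over $k\mid\gcd(m,d/q)$. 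Your argument can be repaired: insert that lemma into $-\tfrac{1}{2\pi i}\sum_{l'\neq 0}\tau(\chi,l')/l'$, interchange the $k$- and $l'$-sums, and substitute $l'=km$; the weight $k$ cancels, each $k$ contributes the same $L$-value, and the arithmetic factor collapses to $\sum_{k\mid d/q}\chi^\star(d/kq)\mu(d/kq)=(\chi^\star\mu\ast 1)(d)$, since the terms of $(\chi^\star\mu\ast 1)(d)$ indexed by divisors not dividing $d/q$ vanish. So your conclusion is correct, but only after an extra interchange of summation that the cited pointwise identity would let you skip---illegitimately. Separately, watch the parity bookkeeping: the negative terms contribute $\sum_{l'\geq 1}\overline{\chi^\star}(-l')/(-l')$, giving the factor $1-\overline{\chi^\star}(-1)$ rather than $1+\overline{\chi^\star}(-1)$; the factor you wrote would select the even case, contradicting the (correct) conclusion you then draw.
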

\begin{proof}
Substituting
\[
\chi(r) = \chi^\star(r) \sum_{k \mid (d,r)} \mu(k),
\]
into the sum over $r$ to get
\begin{equation}\label{equation:sum-B1}
\sum\limits_{r \smallmod c} \chi(r) B_1\left(\frac{r}{c}\right)
= \sum_{k \mid d} \mu(k) \chi^\star(k) \sum_{r \smallmod c/k}  \chi^\star(r)B_1\left(\frac{r}{c/k}\right) \raisebox{-.2cm}{.}
\end{equation}
Consider the inner sum. If $(q(\chi),k) > 1$, then $\chi^\star(k)=0$. We may therefore assume that $(q(\chi),k)=1$, which implies that $q(\chi) \mid c/k$. We substitute $r \to t + rq(\chi)$, where $t$ runs modulo $q(\chi)$ and $r$ now runs modulo $c/kq(\chi)$. Using the orthogonality relations, the sum over $r$ in (\ref{equation:sum-B1}) becomes
\[
\sum_{r = 0 }^{c/kq(\chi)-1}\ \sum_{t = 1}^{q(\chi)-1} \chi^\star(t) \left(\frac{t+rq(\chi)}{c/k} - \frac{1}{2}\right) 
= \sum_{t = 1}^{q(\chi)-1} \chi^\star(t) \sum_{r = 0 }^{c/kq(\chi)-1} \frac{t}{c/k}.
\]
The inner sum simplifies to $t/q(\chi)$. Using orthogonality and (\ref{equation:finite-sum-B1chi}), we obtain
\[
\sum_{t = 1}^{q(\chi)-1} \chi^\star(t) \frac{t}{q(\chi)}
= \sum_{t\smallmod q(\chi)} \chi^\star(t) B_1\!\left(\frac{t}{q(\chi)}\right)
= B_{1,\overline{\chi}^\star}(0).
\]
Applying (\ref{equation:B1chi-definition}) gives
\begin{equation}\label{equation:chiB1-last-eq}
B_{1,\overline{\chi}^\star}(0)
= -\frac{\tau(\chi^\star)}{\pi i}\begin{cases} L(1,\overline{\chi}^\star) &\text{if } \chi(-1)=-1\\
0 &\text{if } \chi(-1)=1.
\end{cases}
\end{equation}
Substituting (\ref{equation:chiB1-last-eq}) for the sum over $r$ in (\ref{equation:sum-B1}) proves the lemma.
\end{proof}

The following result is Lemma 3.2 of \cite{kowalski-book} (corrected on Kowalski's website).
\begin{lemma}\label{thm:gauss-nonprimitve}
If $\psi$ is a non-principal character modulo $d$ then 
\[ \tau(\psi,l) =
\tau(\psi^\star) \sum_{\substack{k \mid l \\ k\mid d/q(\psi)}}
    k\, \overline{\psi}\vphantom{\psi}^\star\!\!\left(\frac{l}{k}\right)
    \psi^\star\!\!\left(\frac{d}{kq(\psi)}\right) \mu\!\left(\frac{d}{kq(\psi)}\right) \raisebox{-.2cm}{.}
\]
\end{lemma}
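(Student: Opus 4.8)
The plan is to reduce the twisted Gauss sum of the imprimitive character $\psi$ to the primitive Gauss sum $\tau(\psi^\star)$ by Möbius inversion. Write $q=q(\psi)$ and use the identity $\psi(n)=\psi^\star(n)\sum_{k\mid(n,d)}\mu(k)$ in the definition $\tau(\psi,l)=\sum_{n\smallmod d}\psi(n)e(nl/d)$. Interchanging the order of summation to collect the terms with $k\mid n$ and then substituting $n=km$ (so that $m$ runs modulo $d/k$), and using that $\psi^\star$ is completely multiplicative, I would arrive at
\[ \tau(\psi,l)=\sum_{k\mid d}\mu(k)\,\psi^\star(k)\sum_{m\smallmod d/k}\psi^\star(m)\,e\!\left(\frac{ml}{d/k}\right). \]
Only the $k$ with $(k,q)=1$ survive, since otherwise $\psi^\star(k)=0$; for such $k$ we have $q\mid d/k$, so $\psi^\star$ genuinely makes sense as a function modulo $d/k$.

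To evaluate the inner sum I would split $m=u+qv$ with $u$ running modulo $q$ and $v$ modulo $d/(kq)$. Because $\psi^\star$ has period $q$, the sum factors as the product of a sum over $u$ and the geometric sum $\sum_{v}e\bigl(vl/(d/(kq))\bigr)$; orthogonality of additive characters forces $d/(kq)\mid l$, and when this holds the remaining sum over $u$ is the twisted primitive Gauss sum $\tau(\psi^\star,lkq/d)=\overline{\psi^\star}(lkq/d)\,\tau(\psi^\star)$. Here I invoke the standard fact that twisting the Gauss sum of a \emph{primitive} character by any integer $m$ multiplies it by $\overline{\psi^\star}(m)$ — valid for all $m$, since both sides vanish when $(m,q)>1$ precisely because $\psi^\star$ is primitive. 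This gives
\[ \tau(\psi,l)=\tau(\psi^\star)\sum_{\substack{k\mid d\\ (d/(kq))\mid l}}\frac{d}{kq}\,\mu(k)\,\psi^\star(k)\,\overline{\psi^\star}\!\left(\frac{lkq}{d}\right). \]

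The final step is the change of summation variable $k\mapsto d/(kq)$. The factor $\psi^\star(k)$ already confines the sum to $k\mid d/q$, on which this substitution is a bijection; the two constraints $k\mid d$ and $(d/(kq))\mid l$ transform into $k\mid d/q$ and $k\mid l$, and replacing $k$ by $d/(kq)$ in each factor reproduces the stated formula verbatim. I do not anticipate a genuine obstacle — the argument is a routine blend of Möbius inversion and the Chinese Remainder Theorem. The only points requiring care are bookkeeping ones: tracking the divisibility conditions correctly through the change of variables, confirming that $q\mid d/k$ on the support so that every inner Gauss sum is well defined, and citing the primitive-character identity $\tau(\psi^\star,m)=\overline{\psi^\star}(m)\tau(\psi^\star)$ in the form that holds for all integers $m$, not merely those coprime to $q$.
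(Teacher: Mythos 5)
Your proof is correct, but note that the paper does not actually prove this statement: it is quoted verbatim as Lemma 3.2 of Iwaniec--Kowalski (with the correction posted on Kowalski's website), so there is no in-paper argument to compare against. Your derivation supplies exactly the standard proof of that cited result, and every step checks out: the expansion $\psi(n)=\psi^\star(n)\sum_{k\mid(n,d)}\mu(k)$, the substitution $n=km$ using complete multiplicativity of $\psi^\star$, the observation that $k\mid d$ and $(k,q)=1$ force $kq\mid d$ and hence $q\mid d/k$, the splitting $m=u+qv$ with the additive orthogonality condition $(d/(kq))\mid l$, the identity $\tau(\psi^\star,m)=\overline{\psi}\vphantom{\psi}^\star(m)\tau(\psi^\star)$ in the form valid for \emph{all} integers $m$ (which genuinely requires primitivity, as you note), and the final involution $k\mapsto d/(kq)$ on the divisors of $d/q$. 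The only cosmetic blemish is that in your penultimate display the condition $(d/(kq))\mid l$ is written under a sum over all $k\mid d$, where $d/(kq)$ need not be an integer; those terms are killed by $\psi^\star(k)=0$ anyway, but it would be cleaner to restrict to $(k,q)=1$ before writing that display. One could also remark that your argument never uses the hypothesis that $\psi$ is non-principal: the formula degenerates correctly to the Ramanujan sum $c_d(l)$ when $q(\psi)=1$.
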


\begin{lemma}\label{thm:sum-character_B1chi}
If $\psi$ is a character modulo $d \mid c$ and $\chi$ is a primitive character modulo $q\mid c$, then
\[
\sum_{t \smallmod c}
    \psi\!\left(\frac{t}{c/d}\right) B_{1,\chi}\!\left(\frac{t}{c/q}\right)
= -\frac{\tau(\overline{\chi}) \tau(\psi^\star)}{\pi i}\,
    (\chi \ast \mu\psi^\star)\!\left(\frac{d}{q(\psi)}\right)
    \begin{cases}
        L(1,\chi\overline{\psi}\vphantom{\psi}^\star)  &\text{if } \chi\psi(-1) = -1\\
        0                               &\text{if } \chi\psi(-1) =  1.
    \end{cases}
\]
\end{lemma}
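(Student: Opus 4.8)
The plan is to mimic the proof of Lemma \ref{thm:sum-characters_B1}, but now carrying the generalized Bernoulli function $B_{1,\chi}$ through the computation. First I would insert the Fourier-type expansion \eqref{equation:B1chi-definition} for $B_{1,\chi}$ into the sum over $t$, so that
\[
\sum_{t \smallmod c} \psi\!\left(\frac{t}{c/d}\right) B_{1,\chi}\!\left(\frac{t}{c/q}\right)
= \frac{-\tau(\overline\chi)}{2\pi i} \sum_{l\neq 0} \frac{\chi(l)}{l} \sum_{t \smallmod c} \psi\!\left(\frac{t}{c/d}\right) e\!\left(\frac{lt}{c}\right).
\]
The inner sum over $t$ is, after the substitution $t \to (c/d)\,u$ (forcing $d\mid t$, i.e. it picks out only the residues divisible by $c/d$ — note $\psi$ vanishes on non-integers by the extension convention), essentially a Gauss-type sum $\tau(\psi, l)$ to modulus $d$; more precisely it equals $\frac{c}{d}\cdot\mathbf{1}[\,d\mid \tfrac{c}{q}\ell \text{-condition}\,]$... the cleaner route is to reverse the order and recognize $\sum_{t\smallmod c}\psi(t/(c/d))e(lt/c)$ directly as $\tau(\psi,l)$ computed modulo $d$ after collapsing the extra $c/d$ factor. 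So the first real step is: \textbf{reduce the $t$-sum to the Gauss sum $\tau(\psi,l)$ modulo $d$}, and then apply Lemma \ref{thm:gauss-nonprimitve} to write $\tau(\psi,l)$ in terms of $\tau(\psi^\star)$, a divisor sum over $k$, and the factor $\psi^\star\mu$ evaluated at $d/(kq(\psi))$.

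Next I would substitute that expression back and interchange the $l$-sum with the $k$-sum, writing $l = km$ where $k \mid d/q(\psi)$; the $\overline\psi^\star(l/k) = \overline\psi^\star(m)$ and $\chi(l) = \chi(k)\chi(m)$ factor nicely, and the $1/l = 1/(km)$ splits as well. After pulling out the constants, the $m$-sum becomes $\sum_{m\neq 0} \frac{\chi(m)\overline\psi^\star(m)}{m}$, which by \eqref{equation:B1chi-definition} (applied to the character $\chi\overline\psi^\star$ at argument $0$) — or directly by the standard identity $\sum_{m\neq 0}\eta(m)/m = -\pi i\, L(1,\eta)/\tau(\bar\eta)$ type relation, exactly as in \eqref{equation:chiB1-last-eq} — collapses to a constant times $L(1,\chi\overline\psi^\star)$ when $\chi\overline\psi^\star$ is odd and to $0$ when it is even. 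Since $\overline\psi^\star$ and $\psi$ have the same parity, the parity condition is $\chi\psi(-1)=-1$, matching the statement. The leftover $k$-sum, namely $\sum_{k\mid d/q(\psi)} \chi(k)\,\psi^\star\mu(d/(kq(\psi)))$, is precisely the Dirichlet convolution $(\chi \ast \mu\psi^\star)(d/q(\psi))$, giving the claimed factor.

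The main obstacle I anticipate is \textbf{bookkeeping the conductor conditions correctly} — in particular verifying that the $t$-sum really does reduce to $\tau(\psi,l)$ at modulus $d$ (with the right normalization of the $c/d$ factor), and that the divisibility constraint $k \mid d/q(\psi)$ from Lemma \ref{thm:gauss-nonprimitve} is compatible with $\chi(k)$ being supported on $k$ coprime to $q$ so that the convolution $(\chi\ast\mu\psi^\star)$ is the natural object. I would also need to be careful that $\chi$ primitive modulo $q$ with $q\mid c$ is used (so no $\chi^\star$ corrections appear on the $\chi$ side), and that the normalization constant $\tau(\overline\chi)\tau(\psi^\star)/(\pi i)$ comes out exactly, tracking the factor of $2$ in \eqref{equation:B1chi-definition} against the symmetrization $l \leftrightarrow -l$. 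None of these steps is deep, but the Gauss-sum reduction in the first step is where an error is most likely to creep in, so I would do that one carefully and leave the convolution identification and the $L$-value extraction as routine.
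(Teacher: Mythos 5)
Your proposal is correct and follows essentially the same route as the paper: expand $B_{1,\chi}$ via \eqref{equation:B1chi-definition}, recognize the $t$-sum as the Gauss sum $\tau(\psi,l)$ to modulus $d$ (the paper just performs the substitution $t\to (c/d)t$ before expanding rather than after), apply Lemma \ref{thm:gauss-nonprimitve}, reindex $l=km$, and extract $2L(1,\chi\overline{\psi}\vphantom{\psi}^\star)$ together with the convolution $(\chi\ast\mu\psi^\star)(d/q(\psi))$. The only blemish is the momentary hesitation about a spurious $c/d$ normalization factor in the $t$-sum reduction; the clean identification you settle on (no extra factor, since $\psi$ vanishes off multiples of $c/d$) is the correct one and is exactly what the paper does.
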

\begin{proof}
If $c/d \nmid t$, then the term vanishes, so we substitute $t \to (c/d)t$ and express $B_{1,\chi}$ as an infinite sum using (\ref{equation:B1chi-definition}):
\begin{align} 
\sum_{t \smallmod c} \psi\left(\frac{t}{c/d}\right) B_{1,\chi}\left(\frac{t}{c/q}\right)
&= \sum_{t \smallmod d} \psi(t)B_{1,\chi}\!\left(\frac{t(c/d)}{c/q}\right) \nonumber\\
&= -\frac{\tau(\overline{\chi})}{2\pi i}\sum_{l\not=0} \frac{\chi(l)}{l} \sum_{t \smallmod d} \psi(a)\, e\!\left(\frac{tl}{d}\right) \raisebox{-.2cm}{.} \label{equation:sum-lemma-B1chi-infinite}
\end{align}

The sum over $t$ is the Gauss sum $\tau(\psi,l)$. Using Lemma \ref{thm:gauss-nonprimitve}, (\ref{equation:sum-lemma-B1chi-infinite}) becomes
\[
-\frac{\tau(\overline{\chi}) \tau(\psi^\star)}{2\pi i}
    \sum_{k \mid d/q(\psi)} \psi^\star\!\left(\frac{d}{kq(\psi)}\right)
    \mu\!\left(\frac{d}{kq(\psi)}\right) \chi(k)
    \sum_{l \not= 0} \frac{\chi(l)}{l} \overline{\psi}\vphantom{\psi}^\star(l).
\]
The sum over $l$ is $2L(1,\chi\overline{\psi}\vphantom{\psi}^\star)$ if $\chi\psi(-1) = -1$ and $0$ otherwise, which completes the proof.
\end{proof}

\begin{lemma}\label{thm:sum-character_rel_prime}
If $\chi$ is a character modulo $q \mid c$, then \[ \sum_{\substack{a \smallmod c \\ (a,c) = 1}} \chi(a) = \begin{cases} \varphi(c) & \text{if $\chi$ is principal} \\
0 & \text{otherwise.}
\end{cases} \] 
\end{lemma}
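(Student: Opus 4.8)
The plan is to dispatch the principal case by a direct count and to reduce the non-principal case to the orthogonality relation $\sum_{b\smallmod q}\chi(b)=0$ by using M\"obius inversion to resolve the coprimality condition.

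First I would treat the case $\chi=\chi_{0,q}$. Since $q\mid c$, every $a$ counted in the sum satisfies $(a,q)=1$, hence $\chi(a)=1$, so the sum merely counts the residues modulo $c$ that are coprime to $c$ and therefore equals $\varphi(c)$.

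For non-principal $\chi$, I would write the indicator of $(a,c)=1$ as $\sum_{e\mid(a,c)}\mu(e)$, interchange the order of summation, substitute $a=ea'$, and use complete multiplicativity of $\chi$ (together with the convention that $\chi$ vanishes off the units modulo $q$) to obtain
\[
\sum_{\substack{a \smallmod c\\(a,c)=1}}\chi(a)
= \sum_{e\mid c}\mu(e)\sum_{\substack{a \smallmod c\\ e\mid a}}\chi(a)
= \sum_{e\mid c}\mu(e)\,\chi(e)\sum_{a' \smallmod c/e}\chi(a').
\]
The factor $\chi(e)$ restricts $e$ to squarefree divisors coprime to $q$, for which $q\mid c/e$; since $\chi$ has period $q$, the inner sum is $\frac{c}{eq}\sum_{b \smallmod q}\chi(b)=0$. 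Hence every term of the outer sum vanishes, and the total is $0$.

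I do not expect a real obstacle here — the computation is routine orthogonality. The one subtlety worth flagging is the gap between being coprime to $q$ and being coprime to $c$: because $q\mid c$, the hypothesis $(a,c)=1$ forces $(a,q)=1$, so $\chi(a)\neq 0$ on every term of the original sum, and it is exactly the convention $\chi(x)=0$ for $(x,q)>1$ that legitimizes the substitution $a=ea'$ and the splitting $\chi(ea')=\chi(e)\chi(a')$. If one prefers to avoid M\"obius inversion, an equivalent route groups the residues $a \smallmod c$ by their class modulo $q$ and observes, via the Chinese Remainder Theorem, that every class $b$ with $(b,q)=1$ contains the same number $N$ of residues coprime to $c$; the sum then equals $N\sum_{b \smallmod q}\chi(b)$, which is $0$ for non-principal $\chi$ and $N\varphi(q)=\varphi(c)$ for the principal character.
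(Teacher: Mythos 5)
Your proof is correct, but it follows a different route from the paper. The paper's proof is a one-liner at level $c$: it inserts the principal character $\chi_{0,c}$ to absorb the coprimality condition, observes that $\chi\chi_{0,c}$ is a character modulo $c$, and invokes the orthogonality relation $\sum_{a \smallmod c}\chi\chi_{0,c}(a) = \varphi(c)$ or $0$ according to whether $\chi\chi_{0,c}$ is principal, noting finally that $\chi\chi_{0,c}$ is principal exactly when $\chi$ is. You instead work at level $q$, resolving the coprimality condition by M\"obius inversion and reducing to $\sum_{b \smallmod q}\chi(b)=0$; your step $q \mid c/e$ for $(e,q)=1$ is the key point and is justified correctly, and your flagged subtlety (that $(a,c)=1$ forces $(a,q)=1$, so no terms are lost) is exactly the right thing to check. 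The paper's argument is shorter but leans on the reader knowing orthogonality for imprimitive characters at the larger modulus, and its closing claim that $\chi\chi_{0,c}$ is principal iff $\chi$ is principal quietly uses the surjectivity of the reduction map $(\mathbb{Z}/c\mathbb{Z})^\times \to (\mathbb{Z}/q\mathbb{Z})^\times$ --- the same fact that powers your alternative fibering argument, which is arguably the most transparent of the three. Any of these would serve; no gaps.
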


\begin{proof}
The sum is equal to
\[ 
\sum_{a \smallmod c } \chi(a)\chi_{0,c}(a)
= \begin{cases}
\varphi(c) &\text{if } \chi\chi_{0,c} \text{ is principal}\\
0 &\text{otherwise,}
\end{cases} \]
and $\chi\chi_{0,c}$ is principal exactly when $\chi$ is principal.
\end{proof}

In the next theorem, we fix $c$ and regard $\S(a,c)$ as a function of $a$ from $(\mathbb{Z}/c\mathbb{Z})^\times$ into $\mathbb{C}$.

\begin{subsection}{Fourier transform of $\S$}
\end{subsection}
\begin{theorem}\label{thm:fourierTransform}
Let $\xi$ be a character modulo $c$ where $q_1q_2 \mid c$. The finite Fourier transform of $\S(a,c)$ is
\[ \widehat{S}_{\chi_1,\chi_2}(\xi) = 
\sum_{\substack{a \smallmod c \\ (a,c) = 1}} \S(a,c) \xi(a)=
\frac{\varphi(c)}{(\pi i)^2}\begin{cases}
L(1,(\xi\chi_2)^\star) L(1,\overline{\xi}\vphantom{\xi}^\star\chi_1)
g_{\chi_1,\chi_2}(\xi;c) & \text{if } \xi\chi_1(-1) = -1\\
0 &\text{if } \xi\chi_1(-1) = 1,
\end{cases}\]
where $g_{\chi_1,\chi_2}(\xi;c)$ is as defined in (\ref{equation:definition-g}).
\end{theorem}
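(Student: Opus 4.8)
The plan is to start from the alternate formula (\ref{equation:SwithB1chi}) for $\S(a,c)$, substitute it into the definition of the Fourier transform $\widehat{S}_{\chi_1,\chi_2}(\xi) = \sum_{(a,c)=1} \S(a,c)\xi(a)$, and then interchange the sum over $a$ with the sum over $j$. The inner sum over $a$ coprime to $c$ will involve $B_{1,\chi_1}(aj/(c/q_1))$ times $\xi(a)$; the idea is to expand $B_{1,\chi_1}$ via its infinite series (\ref{equation:B1chi-definition}) and re-index so that the sum over $a$ becomes a Gauss-sum-like object, or alternatively to first decouple the coprimality constraint on $a$ using $\chi_{0,c}(a)$ and a Möbius inversion. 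The cleanest route is probably to change variables to absorb $j$ into the argument where possible and use Lemma \ref{thm:character-sum-congruence} to detect the congruence conditions that arise.

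More concretely, I would first replace $B_1(j/c)$ in (\ref{dedekind-sum-finite-formula}) — or work with (\ref{equation:SwithB1chi}) directly — and write $\sum_{(a,c)=1}\xi(a)\,\S(a,c)$ as a triple sum over $j \bmod c$, over the variable $n$ (or $l$) from the generalized Bernoulli function, and over $a$. The sum over $a \bmod c$ with $(a,c)=1$ of $\xi(a) e(\text{linear in }a)$ is the twisted Gauss sum $\tau(\xi\chi_{0,c}, \,\cdot\,)$; but since $\xi$ has modulus $c$ it is more natural to first write $a$ ranging over all residues mod $c$ by inserting $\chi_{0,c}(a)$ and noting $\xi\chi_{0,c} = \xi$, so the $a$-sum is genuinely $\tau(\xi,\cdot)$ and we can invoke Lemma \ref{thm:gauss-nonprimitve} to pass to the primitive character $\xi^\star$ (or $\overline{\xi}^\star$). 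At this stage the parity obstruction appears: the term involving $B_{1,\chi_1}$ forces, via the $\chi\psi(-1)$ dichotomy in Lemma \ref{thm:sum-character_B1chi}, the vanishing unless $\xi\chi_1(-1) = -1$, which is exactly the case split in the statement. The $L$-functions $L(1,(\xi\chi_2)^\star)$ and $L(1,\overline{\xi}^\star\chi_1)$ should emerge from the two resulting infinite sums over $l$: one from expanding $B_{1,\chi_1}$ (giving $L(1,\chi_1\overline{\xi}^\star)$ via Lemma \ref{thm:sum-character_B1chi}) and one from the $B_1(j/c)$ factor combined with the $\overline{\chi_2}(j)$ weight and the residual sum over $j$ (giving $L(1,(\xi\chi_2)^\star)$ via Lemma \ref{thm:sum-characters_B1} applied to the character $\xi\chi_2$ of modulus $c$).

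The main obstacle — and the place where the intricate function $g_{\chi_1,\chi_2}(\xi;c)$ in (\ref{equation:definition-g}) is born — is bookkeeping the divisor sums. After applying Lemma \ref{thm:gauss-nonprimitve} to the $a$-sum and Lemmas \ref{thm:sum-characters_B1} and \ref{thm:sum-character_B1chi} to the two remaining sums, one is left with several nested divisor conditions: the conductor $q(\xi)$ must divide the relevant modulus, the Möbius-convolution factors $((\overline{\xi\chi_2})^\star\mu \ast 1)$ and $(\chi_1 \ast \mu\xi^\star)$ appear from the two lemmas, and the factor $\overline{\chi_2}(c/d)/\varphi(d)$ plus the congruence $d \equiv 0 \bmod q(\xi)$ come from tracking how the original modulus $c$ splits. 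The key step is to organize this as a single sum over $d \mid c$ with $d \equiv 0 \bmod q(\xi)$; I expect this requires carefully matching the index variable $k$ from Lemma \ref{thm:gauss-nonprimitve} with the divisor variables from Lemmas \ref{thm:sum-characters_B1} and \ref{thm:sum-character_B1chi}, setting $d$ to be an appropriate product, and verifying that the three Gauss-sum prefactors collapse to $\tau((\overline{\xi\chi_2})^\star)\tau(\xi^\star)\tau(\overline{\chi_1})$ exactly as in (\ref{equation:definition-g}). Checking the reciprocity/consistency of these prefactors (in particular that $\overline{\xi}$ appearing in one place and $\xi$ in another reconcile with $|\tau|^2 = q$ when we later take absolute values for Theorem \ref{thm:second-moment}) is the delicate part, but it is purely mechanical once the indexing is set up correctly.
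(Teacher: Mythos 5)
Your sketch correctly identifies the endgame (the two $L$-values coming from Lemma \ref{thm:sum-characters_B1} applied to the $j$-sum and Lemma \ref{thm:sum-character_B1chi} applied to the $B_{1,\chi_1}$ part, and the parity dichotomy in those lemmas producing the vanishing when $\xi\chi_1(-1)=1$), but the concrete route you commit to is missing the one idea that makes the computation go through, and the route you describe instead runs into a real obstruction. The paper's opening move is to introduce an auxiliary variable $t$ with $t \equiv ar \pmod{c}$, so that $\widehat{S}_{\chi_1,\chi_2}(\xi)$ becomes a sum over $a$, $r$, $t$ subject only to that congruence, and then to detect the congruence with Lemma \ref{thm:character-sum-congruence}. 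That single step (i) fully separates the three variables, (ii) reduces the $a$-sum to the plain orthogonality relation of Lemma \ref{thm:sum-character_rel_prime}, which forces $\psi^\star = \xi^\star$ and collapses the double sum over $d \mid c$ and $\psi \bmod d$ to the single sum over $d \equiv 0 \pmod{q(\xi)}$ appearing in $g$, and (iii) is the sole source of the factor $1/\varphi(d)$ in (\ref{equation:definition-g}). You mention Lemma \ref{thm:character-sum-congruence} once in passing, but in your setup there is no congruence for it to detect, because you never introduce $t$; and your ``tracking how the original modulus $c$ splits'' does not account for where $1/\varphi(d)$ could come from.

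The alternative you actually develop --- expand $B_{1,\chi_1}$ by (\ref{equation:B1chi-definition}) into an $l$-sum so that the $a$-sum becomes $\sum_{(a,c)=1}\xi(a)e(alj/c) = \tau(\xi, lj)$, then apply Lemma \ref{thm:gauss-nonprimitve} --- does not decouple the variables. Lemma \ref{thm:gauss-nonprimitve} turns $\tau(\xi,lj)$ into a sum over $k \mid lj$, $k \mid c/q(\xi)$ of terms containing $\overline{\xi}\vphantom{\xi}^\star(lj/k)$, and the condition $k \mid lj$ together with the argument $lj/k$ entangles $l$ and $j$; it does not factor as a product of a function of $l$ and a function of $j$, so Lemmas \ref{thm:sum-characters_B1} and \ref{thm:sum-character_B1chi} cannot then be applied as stated (both are lemmas about clean one-variable sums). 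Note also that in the paper Lemma \ref{thm:gauss-nonprimitve} is used only \emph{inside} the proof of Lemma \ref{thm:sum-character_B1chi}, to handle the imprimitive Gauss sum $\tau(\psi,l)$ for the character $\psi = \xi^\star\chi_{0,d}$ of modulus $d$ --- not on the $a$-sum. Untangling $k \mid lj$ is not ``purely mechanical bookkeeping''; avoiding it is precisely what the $t \equiv ar$ substitution is for, so as written the proposal has a genuine gap.
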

\begin{proof}
Substituting with (\ref{equation:SwithB1chi}) into $\widehat{S}_{\chi_1,\chi_2}(\xi)$ and setting $t \equiv ar$ (mod $c$), we have
\begin{equation}\label{equation:expanded_moment}
\widehat{S}_{\chi_1,\chi_2}(\xi) =
\sum_{\substack{a \smallmod c \\ (a,c)=1}}\ \,
\sum_{\substack{r,t \smallmod c \\ t \equiv ar \smallmod c}} \overline{\chi_2}(r)B_1\!\left(\frac{r}{c}\right)B_{1,\chi_1}\!\left(\frac{t}{c/q_1}\right)\xi(a).
\end{equation}

We use Lemma \ref{thm:character-sum-congruence} to replace the condition $t \equiv ar$ in (\ref{equation:expanded_moment}).  Because $(a,c)=1$, we have $\psi(\frac{ar}{c/d})=\psi(a)\psi(\frac{r}{c/d})$. This substitution separates the variables:
\begin{multline} \label{equation:expanded_moment-seperated-variables}
\sum_{d \mid c}\frac{1}{\varphi(d)}
\sum_{\psi \smallmod d}\,
\Bigg(\,\sum_{\substack{a \smallmod c \\ (a,c) = 1}}\overline{\psi}\xi(a)\Bigg)
\left(\sum_{r \smallmod c} \overline{\chi_2}(r)\overline{\psi}\left(\frac{r}{c/d}\right)B_1\!\left(\frac{r}{c}\right) \right) \cdot \\ 
\left(\sum_{t \smallmod c} \psi\left(\frac{t}{c/d}\right)B_{1,\chi_1}\!\left(\frac{t}{c/q_1}\right) \right) \raisebox{-.3cm}{.}
\end{multline}
Consider the sum over $a$.  The character $\overline{\psi}\xi$ is principal exactly when $\psi^\star = \xi^\star$. Using Lemma \ref{thm:sum-character_rel_prime}, (\ref{equation:expanded_moment-seperated-variables}) becomes
\[
\varphi(c)\sum_{d \mid c}\frac{1}{\varphi(d)}
\sum_{\substack{\psi \smallmod d \\ \psi^\star = \xi^\star}}\,
\left(\sum_{r \smallmod c} \overline{\chi_2}(r)\overline{\psi}\left(\frac{r}{c/d}\right)B_1\!\left(\frac{r}{c}\right) \right)
\left(\sum_{t \smallmod c} \psi\left(\frac{t}{c/d}\right)B_{1,\chi_1}\!\left(\frac{t}{c/q_1}\right) \right) \raisebox{-.3cm}{.}
\]

If $q(\xi) \mid d$, then there is exactly one character $\psi$ modulo $d$ such that $\psi^\star = \xi^\star$, namely $\xi^\star\chi_{0,d}$. If $q(\xi) \nmid d$, then there is no such character. 

Therefore, we may change the double sum over $d$ and $\psi$ to a single sum over $d$ as shown
\[ \sum_{d \mid c} \sum_{\substack{\psi \smallmod d \\ \psi^\star = \xi^\star}} \ \longrightarrow
 \sum_{\substack{d \mid c \\ d \equiv 0\smallmod q(\xi)}}, \]
where the character $\psi$ corresponding to $d$ in the right-hand sum is $\xi^\star\chi_{0,d}$. 

The sum over $r$ is
\begin{equation}\label{equation:fourier-transform-sum-r}
\sum_{r \smallmod c} \overline{\chi_2}(r)\overline{\psi}\!\left(\frac{r}{c/d}\right)B_1\!\left(\frac{r}{c}\right)
= \overline{\chi_2}(c/d) \sum_{r \smallmod d} \overline{\chi_2}(r)\overline{\psi}(r)B_1\!\left(\frac{r}{d}\right) \raisebox{-.2cm}{.}
\end{equation}
We may assume that $q_2$ and $c/d$ are coprime. Therefore $q_2 \mid d$, which implies that $\chi_2\psi$ is a character modulo $d$. Applying Lemmas \ref{thm:sum-characters_B1} and \ref{thm:sum-character_B1chi} to the sum over $r$ in (\ref{equation:fourier-transform-sum-r}) and the sum over $t$ in (\ref{equation:expanded_moment-seperated-variables}), respectively, completes the proof.
\end{proof}

To prove Theorem \ref{thm:second-moment}, apply Parseval's formula
\[ \sum_{\substack{a \smallmod c \\ (a,c) = 1}} \lvert\S(a,c)\rvert^2 =
\frac{1}{\varphi(c)}
\sum_{\psi \smallmod c}
    \Bigg\lvert \sum_{\substack{a \smallmod c \\ (a,c) = 1}} \S(a,c) \psi(a) \Bigg\rvert^2 \raisebox{-.3cm}{.} \]
The result follows immediately from substitution using Theorem \ref{thm:fourierTransform}.


\section{Proof of Theorem \ref{thm:dedekind-sum-bounds}}

We prove Theorem \ref{thm:dedekind-sum-bounds} by obtaining the upper bound and lower bounds individually. In this section, $\chi_1$ and $\chi_2$ are nontrivial primitive characters modulo $q_1$ and $q_2$, respectively, such that $\chi_1\chi_2(-1) = 1$, and $q_1q_2 \mid c$.

\subsection{Upper bound}
\begin{theorem}\label{thm:second-moment-upper-bound}
For every $\epsilon > 0$, we have
\[ \sum_{\substack{a \smallmod c \\ (a,c) = 1}} \lvert\S(a,c)\rvert^2
\ll_\epsilon
q_1 c^{2+\epsilon}. \]
\end{theorem}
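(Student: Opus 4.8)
The plan is to feed the exact second-moment formula of Theorem~\ref{thm:second-moment} into a direct estimate and bound every factor on its right-hand side. The sum in~(\ref{equation:second-moment}) runs over characters $\psi$ modulo $c$ with $\psi\chi_1(-1)=-1$, and the first point is that for every such $\psi$ neither $\overline{\psi}\vphantom{\psi}^\star\chi_1$ nor $(\psi\chi_2)^\star$ has conductor $1$: if $\overline{\psi}\vphantom{\psi}^\star\chi_1$ had conductor $1$ then $\psi^\star=\chi_1$, forcing $\psi\chi_1(-1)=\chi_1(-1)^2=1$; if $(\psi\chi_2)^\star$ were trivial then $\psi^\star=\overline{\chi_2}$, forcing $\psi\chi_1(-1)=\chi_1\chi_2(-1)=1$; both contradict $\psi\chi_1(-1)=-1$. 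So each $L$-value in~(\ref{equation:second-moment}) is that of a nonprincipal character of modulus dividing $c$, and the classical bound $\lvert L(1,\chi)\rvert\ll\log(2q(\chi))$ (applied to the inducing primitive character, with the trivial bound on the missing Euler factors) gives $\lvert L(1,\overline{\psi}\vphantom{\psi}^\star\chi_1)\rvert^2\lvert L(1,(\psi\chi_2)^\star)\rvert^2\ll_\epsilon c^\epsilon$ uniformly in $\psi$. Since $\varphi(c)\le c$, it therefore suffices to prove
\[
\sum_{\psi \smallmod c}\lvert g_{\chi_1,\chi_2}(\psi;c)\rvert^2\ll_\epsilon q_1 c^{1+\epsilon},
\]
where we have enlarged the sum to all characters modulo $c$, which is harmless since the terms are nonnegative.

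For a pointwise bound on $g_{\chi_1,\chi_2}(\psi;c)$ I would handle the Gauss-sum prefactor and the divisor sum in~(\ref{equation:definition-g}) separately. From $\lvert\tau(\eta^\star)\rvert^2=q(\eta)$, the prefactor $\tau((\overline{\psi\chi_2})^\star)\tau(\psi^\star)\tau(\overline{\chi_1})$ contributes $q((\psi\chi_2)^\star)\,q(\psi)\,q_1$ to $\lvert g_{\chi_1,\chi_2}(\psi;c)\rvert^2$. For the divisor sum I would bound the two convolutions trivially, $\lvert((\overline{\psi\chi_2})^\star\mu\ast1)(d)\rvert\le\tau(d)$ and $\lvert(\chi_1\ast\mu\psi^\star)(d/q(\psi))\rvert\le\tau(d/q(\psi))$, each $\ll_\epsilon c^\epsilon$, and estimate the remaining sum by $\sum_{q(\psi)\mid d\mid c}1/\varphi(d)\le\varphi(q(\psi))^{-1}\sum_{e\mid c/q(\psi)}1/\varphi(e)\ll_\epsilon\varphi(q(\psi))^{-1}c^\epsilon$, using $\varphi(ab)\ge\varphi(a)\varphi(b)$. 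Combining these with $q((\psi\chi_2)^\star)\le\lcm(q(\psi),q_2)=q(\psi)q_2/\gcd(q(\psi),q_2)$ and $q(\psi)/\varphi(q(\psi))\ll_\epsilon c^\epsilon$ produces the pointwise bound
\[
\lvert g_{\chi_1,\chi_2}(\psi;c)\rvert^2\ll_\epsilon\frac{q_1 q_2}{\gcd(q(\psi),q_2)}\,c^\epsilon .
\]

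To finish I would sum over $\psi$ by collecting the characters modulo $c$ of a given conductor $m=q(\psi)$, of which there are at most $\varphi(m)\le m$; this gives
\[
\sum_{\psi \smallmod c}\lvert g_{\chi_1,\chi_2}(\psi;c)\rvert^2\ll_\epsilon q_1 q_2\,c^\epsilon\sum_{m\mid c}\frac{m}{\gcd(m,q_2)} .
\]
The decisive arithmetic input is $m/\gcd(m,q_2)=\lcm(m,q_2)/q_2\le c/q_2$, valid because $\lcm(m,q_2)\mid c$ whenever $m\mid c$ (recall $q_2\mid c$); hence $\sum_{m\mid c}m/\gcd(m,q_2)\le\tau(c)\,c/q_2\ll_\epsilon c^{1+\epsilon}/q_2$, the factor $q_2$ cancels, and we obtain $\sum_{\psi \smallmod c}\lvert g_{\chi_1,\chi_2}(\psi;c)\rvert^2\ll_\epsilon q_1 c^{1+\epsilon}$. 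Combined with the reduction above, this proves the theorem after relabeling $\epsilon$.

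The step I expect to be the main obstacle is this last piece of bookkeeping. The crude pointwise bound $\lvert g_{\chi_1,\chi_2}(\psi;c)\rvert^2\ll_\epsilon q_1 q_2 c^\epsilon$ — and, a fortiori, summing it trivially over all $\varphi(c)\le c$ characters $\psi$ modulo $c$ — loses at least a full factor of $q_2$ and yields only $q_1 q_2 c^{2+\epsilon}$ or worse. The argument genuinely needs the saving $\gcd(q(\psi),q_2)^{-1}$ coming from the conductor of $(\psi\chi_2)^\star$, together with the relation $\lcm(q(\psi),q_2)\mid c$ used when summing over $\psi$. Arranging the Gauss sums, the conductor bookkeeping, and the divisor estimates so that exactly one power of $q_2$ appears and is then cancelled is the delicate part; everything else is routine estimation of multiplicative functions.
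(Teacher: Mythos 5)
Your proposal is correct and follows essentially the same route as the paper: substitute the exact formula of Theorem~\ref{thm:second-moment}, bound the $L$-values by powers of $\log c$, bound $g_{\chi_1,\chi_2}(\psi;c)$ pointwise using $\lvert\tau(\eta^\star)\rvert^2=q(\eta)$ together with divisor-function estimates on the convolutions and on $1/\varphi(d)$, and then sum over $\psi$ by grouping characters according to their conductor. The only difference is bookkeeping: the paper bounds $q(\psi\chi_2)\le c$ and extracts the needed saving from $\sum_{\psi}1/q(\psi)\ll_\epsilon c^{\epsilon}$, whereas you retain $q((\psi\chi_2)^\star)\le\lcm(q(\psi),q_2)$ and cancel the resulting factor of $q_2$ at the end; both arrangements spend the same total saving and give the same bound.
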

\begin{proof}
It is well known (see \cite[chapter 14]{davenport-book}) that there exists some $K>0$ so that for any character $\chi$ modulo $q$, we have
\[ \lvert L(1,\chi)\rvert \leq K \log q. \]
From applying this bound to (\ref{equation:second-moment}), it follows that
\begin{equation} \label{equation:upperBound2moment}
\sum_{\substack{a \smallmod c \\ (a,c) = 1}} \lvert\S(a,c)\rvert^2
\leq
K^4 \log(c)^4 \varphi(c) \sum_{\substack{\psi \smallmod c \\ \psi\chi_1(-1)=-1}}
\vert g_{\chi_1,\chi_2}(\psi;c)\rvert^2.
\end{equation}
We now bound the absolute value of $g$ as defined in (\ref{equation:definition-g}). Using the triangle inequality and that $q(\psi\chi_2) \leq c$, we have
\begin{equation}\label{equation:upper-bound-first-eq}
\lvert g_{\chi_1\chi_2}(\psi;c)\rvert^2
\leq q_1 c q(\psi)\lowerparen{3pt}{
    \sum_{\substack{d \mid c \\ d \equiv 0\smallmod q(\psi)}}
    \frac{1}{\varphi(d)}
    \Big\lvert ((\overline{\psi\chi_2})^\star\mu \ast 1)(d)\Big\rvert
    \left\lvert (\chi_1 \ast \mu\psi^\star)\!\left(\frac{d}{q(\psi)}\right)\right\rvert }^2 \raisebox{-.4cm}{.}
\end{equation}
The convolutions are bounded by the divisor function:
\[
\Big\lvert ((\overline{\psi\chi_2})^\star\mu \ast 1)(d) \Big\rvert 
\leq \sum_{k \mid d} \left\lvert \mu(k)\, (\psi \chi_2)^\star(k) \right\rvert\\
\leq \sigma_0(d)
\leq \sigma_0(c),
\]
where $\sigma_k(n) = \sum_{d\mid n} d^k$ is the sum of divisors function. Likewise,
\[
\left\lvert (\chi_1 \ast \mu\psi^\star)\!\left(\frac{d}{q(\psi)}\right) \right\rvert
\leq \sum_{k\mid d/q(\psi)} \left\lvert \chi_1\!\left(\frac{d}{k q(\psi)}\right) \mu(k)\psi^\star(k) \right\rvert\\
\leq \sigma_0\!\left(\frac{d}{q(\psi)}\right)
\leq \sigma_0(c).
\]
For each $\epsilon > 0$, we have the bound $\sigma_0(c) \sll{\epsilon}  c^\epsilon$ for all positive integers $c$. Similarly, we have $\varphi(d) \sgg{\epsilon} d^{1-\epsilon}$.

Substituting these bounds into (\ref{equation:upper-bound-first-eq}) then using that $d \geq q(\psi)$, we find that the terms inside the sum over $d$ are all at most $c^{\epsilon}/q(\psi)$. The sum contains at most $\sigma_0(c)$ terms, so, in all,
\begin{equation}\label{equation:upper-bound-g-epsilons}
\lvert g_{\chi_1,\chi_2}(\psi;c)\rvert^2
\ll_\epsilon q_1 \frac{c^{1+\epsilon}}{q(\psi)} .
\end{equation}

Inserting (\ref{equation:upper-bound-g-epsilons}) into (\ref{equation:upperBound2moment}), gives
\begin{equation}\label{equation:upper-bound-S-last}
\sum_{\substack{a \smallmod c \\ (a,c) = 1}} \lvert\S(a,c)\rvert^2
\ll_\epsilon
q_1 \varphi(c) \log(c)^4 \sum_{\psi \smallmod c}\frac{c^{1+\epsilon}}{q(\psi)}
\ll_\epsilon
q_1 c^{2+\epsilon} \sum_{\psi \smallmod c}\frac{1}{q(\psi)}.
\end{equation}
We reorder the sum over $\psi$ by its possible conductors. The number of characters modulo $c$ with conductor $d$ is at most $\varphi(d)$, so
\begin{equation}\label{equation:sum-reciprocal-conductor}
\sum_{\psi \smallmod c}\frac{1}{q(\psi)} \leq \sum_{d\mid c} \frac{\varphi(d)}{d} \leq \sigma_0(c) \ll_\epsilon  c^\epsilon.
\end{equation}
Inserting (\ref{equation:sum-reciprocal-conductor}) into (\ref{equation:upper-bound-S-last}) completes the proof.
\end{proof}

\subsection{Lower bound}
We first prove some lemmas on primitive characters.  Let $\varphi^\star(n)$ denote the number of primitive characters modulo $n$. For $n\geq 1$ and $p$ a prime, we have $\varphi^\star(p^n) = \varphi(p^n) - \varphi(p^{n-1})$. Explicitly,
\begin{equation}\label{equation:psi-star}
\varphi^\star(p^n) = \begin{cases}
    p^n \left(1-\frac{1}{p}\right)^2    &\text{if } n > 1\\
    p - 2                               &\text{if } n = 1.
\end{cases}    
\end{equation}

\begin{lemma}\label{thm:number-odd-primitive}
There are at least $\varphi^\star(p^{n})/2 - 1$ primitive characters with a given parity modulo $p^{n}$.
\end{lemma}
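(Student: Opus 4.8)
The plan is to count primitive characters modulo $p^n$ of each parity by relating them to all characters modulo $p^n$, whose parities are easy to count, and then correcting for the imprimitive ones. First I would recall that among the $\varphi(p^n)$ characters modulo $p^n$, exactly half are even and half are odd when $p$ is odd and $n\ge 1$ (since $-1$ is not a square in $(\mathbb{Z}/p^n\mathbb{Z})^\times$ when that group is cyclic of even order, so the even characters form an index-$2$ subgroup), giving $\varphi(p^n)/2$ of each parity. The only case needing separate care is $p=2$, where the group is not cyclic; but there $\varphi^\star(2^n)$ is small (it is $0$ for $n=1$, $1$ for $n=2$, and $2^{n-2}$ for $n\ge 3$) and the claimed bound $\varphi^\star(2^n)/2-1$ is easily checked directly or is vacuous.

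Next I would handle the imprimitive characters modulo $p^n$: a character modulo $p^n$ is imprimitive exactly when it is induced by a character modulo $p^{n-1}$, and there are exactly $\varphi(p^{n-1})$ of these. So the primitive characters of a given parity number
\[
\varphi^\star_{\pm}(p^n) \;\ge\; \frac{\varphi(p^n)}{2} - \varphi(p^{n-1}),
\]
since every imprimitive character of that parity is one of those $\varphi(p^{n-1})$ characters. For $n\ge 2$ this is $\varphi(p^n)/2 - \varphi(p^{n-1})$, which I would then compare with $\varphi^\star(p^n)/2 - 1 = (\varphi(p^n)-\varphi(p^{n-1}))/2 - 1$; the inequality $\varphi(p^n)/2 - \varphi(p^{n-1}) \ge (\varphi(p^n)-\varphi(p^{n-1}))/2 - 1$ reduces to $\varphi(p^{n-1})/2 \le 1$, i.e. $\varphi(p^{n-1})\le 2$. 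That is false in general, so this crude bound is too lossy and I need a sharper accounting.

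The fix, and the main obstacle, is to count the imprimitive characters of each parity \emph{exactly} rather than lumping them all into one parity class. The imprimitive characters modulo $p^n$ are in bijection with characters modulo $p^{n-1}$, and parity is preserved under this identification; since (for $p$ odd, $n-1\ge 1$) characters modulo $p^{n-1}$ split evenly into $\varphi(p^{n-1})/2$ even and $\varphi(p^{n-1})/2$ odd, we get exactly $\varphi(p^{n-1})/2$ imprimitive characters of each parity modulo $p^n$ — at least once $n\ge 2$, with the case $n=1$ (no imprimitive characters) and the residual cases where $\varphi(p^{n-1})$ is odd (only $p^{n-1}=1$ or $2$) checked by hand. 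Subtracting, the number of primitive characters of a fixed parity modulo $p^n$ is exactly $\varphi(p^n)/2 - \varphi(p^{n-1})/2 = \varphi^\star(p^n)/2$ for $n\ge 2$, which is even stronger than claimed; and for $n=1$ we have $(p-1)/2$ of each parity among the $p-2$ primitive characters, so one parity class has $(p-1)/2$ and the other $(p-3)/2 = \varphi^\star(p^1)/2 - 1/2 \ge \varphi^\star(p)/2 - 1$, and one checks the small primes $p=2,3$ separately. Assembling these cases yields the stated bound $\varphi^\star(p^n)/2 - 1$ uniformly.
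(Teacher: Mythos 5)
Your argument is correct in substance but takes a genuinely different route from the paper. The paper writes the count of odd (resp.\ even) primitive characters as $\sum^\star_{\chi \smallmod p^n}\tfrac{1\mp\chi(-1)}{2}$ and evaluates it in one stroke using the explicit formula for $\sum^\star_{\chi \smallmod q}\chi(m)$ (Kowalski's equation (3.8)), specialized to $m=-1$; this handles all $p$ and $n$ uniformly and makes the error term $\pm 1$ appear automatically as a M\"obius-type correction. You instead count elementarily: split all $\varphi(p^n)$ characters by parity, split the $\varphi(p^{n-1})$ imprimitive ones by parity, and subtract. Your route is more self-contained (no citation needed) and actually yields the sharper exact count $\varphi^\star(p^n)/2$ for $p$ odd, $n\geq 2$; the paper's route is shorter on the page and avoids the case analysis at $n=1$ and $p=2$.

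Two small repairs are needed. First, your justification of the even/odd split --- ``$-1$ is not a square in $(\mathbb{Z}/p^n\mathbb{Z})^\times$'' --- is false in general (for $p=5$ one has $-1\equiv 2^2$); the correct and equally short reason is that $\chi\mapsto\chi(-1)$ is a homomorphism from the character group onto $\{\pm 1\}$, surjective because characters of a finite abelian group separate the points $-1\neq 1$, so its kernel (the even characters) has index exactly $2$. Second, the case $p=2$ is not ``vacuous or easily checked'' for large $n$: the bound $\varphi^\star(2^n)/2-1=2^{n-3}-1$ grows, so you cannot dismiss it by the smallness of $\varphi^\star(2^n)$. However, your subtraction argument goes through verbatim there once you note that for $n\geq 3$ the group $(\mathbb{Z}/2^n\mathbb{Z})^\times\cong\{\pm 1\}\times\langle 5\rangle$ again forces exactly half of all characters, and (for $n\geq 4$) half of the imprimitive ones, to be odd; the residual cases $n\leq 3$ are finite and checkable. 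With these adjustments the proposal is a complete and valid alternative proof.
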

\begin{proof} If we let $\sum^\star$ denote summation over primitive characters, then the sum
\[ \sideset{}{^\star}\sum_{\chi \smallmod p^n} \frac{1-\chi(-1)}{2} \]
counts the number of odd primitive characters modulo $p^n$. To finish the proof, we distribute the sum and use equation (3.8) of \cite{kowalski-book}, which states that
\begin{equation}\label{equation:prim-char-sum-lemma}
\sideset{}{^\star}\sum_{\chi \smallmod q} \chi(m) =
\begin{cases}
0   &\text{if } (m,q) > 1\\
\sum\limits_{\substack{d \mid m-1 \\ d \mid q}} \mu(d) \varphi\left(\frac{q}{d}\right)   &\text{otherwise.}
\end{cases}
\end{equation}
To count the number even primitive characters, we use the sum
\[ \sideset{}{^\star}\sum_{\chi \smallmod p^n} \frac{1+\chi(-1)}{2} \]
and apply (\ref{equation:prim-char-sum-lemma}) again.
\end{proof}

\begin{lemma}\label{thm:primitive-cond-count}
Let $p$ be a prime, $\xi$ be a character modulo $p^k$, and $n\geq k$. For each of the following three conditions and any $\epsilon > 0$, there exists $K_\epsilon > 0$ so that the number of primitive characters $\psi$ modulo $p^n$ with a given parity and which satisfy that condition is bounded below by $K_\epsilon p^{n(1-\epsilon)}$:
\begin{enumerate}
    \item $\psi\xi$ is primitive, where $n > k$.
    \item $\psi\xi$ is primitive, where $p > 3$.
    \item $q(\psi\xi) \geq p^{n-1}$ for $p^n \not= 3, 4, 8$.
\end{enumerate}
\end{lemma}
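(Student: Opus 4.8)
The plan is to work locally at the prime $p$, using the explicit structure of $(\mathbb{Z}/p^n\mathbb{Z})^{\times}$ to turn each condition on $q(\psi\xi)$ into a congruence count. For $p$ odd, write $(\mathbb{Z}/p^n\mathbb{Z})^{\times}=U\times C$, where $U=1+p\mathbb{Z}/p^n\mathbb{Z}$ is cyclic of order $p^{n-1}$ and $C$ is cyclic of order $p-1$; correspondingly each character $\psi$ modulo $p^n$ factors as $\psi=\alpha\beta$ with $\alpha$ a character of $U$ and $\beta$ a character of $C$. I would lean on two standard facts: for $n\geq 2$ the power of $p$ in $q(\psi)$ is governed by $\mathrm{ord}(\alpha)$ --- in particular $\psi$ is primitive modulo $p^n$ iff $\alpha$ has full order $p^{n-1}$, and (away from the trivial character) $q(\psi)\geq p^{n-1}$ iff $\mathrm{ord}(\alpha)\geq p^{n-2}$; and the parity $\psi(-1)$ depends only on $\beta$, since $-1\in C$, so fixing the parity leaves exactly $(p-1)/2$ admissible $\beta$. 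For $p=2$ and $n\geq 3$ the same scheme works with $(\mathbb{Z}/2^n\mathbb{Z})^{\times}=\langle-1\rangle\times\langle 5\rangle$, the role of $U$ played by $\langle 5\rangle\cong\mathbb{Z}/2^{n-2}\mathbb{Z}$, and the parity equal to the $\langle-1\rangle$-component.

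Next I would fix $\xi$ and write its $U$-component as $\theta^{b}$ for a chosen generator $\theta$ of the dual of $U$; since $q(\xi)\leq p^{k}$, this component has order at most $p^{k-1}$. For a primitive $\psi$ modulo $p^{n}$, write its $U$-component as $\theta^{u}$; then primitivity is the condition $p\nmid u$, and $(\psi\xi)|_{U}=\theta^{u+b}$ has order $p^{n-1}/\gcd(u+b,p^{n-1})$. Everything is thus reduced to counting residues $u\in(\mathbb{Z}/p^{n-1}\mathbb{Z})^{\times}$ for which $u+b$ has a prescribed small $p$-adic valuation, and then multiplying by the $(p-1)/2$ parity choices (by $1$ when $p=2$).

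Now the three cases. If $n>k$ (condition 1), then $\theta^{b}$ has order at most $p^{k-1}\leq p^{n-2}$, so $p\mid b$ (with the evident modification $2\mid b$ when $p=2$); hence $p\nmid u$ forces $p\nmid u+b$, so $\psi\xi$ is automatically primitive modulo $p^{n}$, and the count equals the number of primitive characters modulo $p^{n}$ of the given parity, which by Lemma \ref{thm:number-odd-primitive} and (\ref{equation:psi-star}) is at least $\varphi^{\star}(p^{n})/2-1$, hence $\gg p^{n}$ once $p^{n}$ exceeds a small bound. If $n=k$, then $b$ is unconstrained. For condition 2 one needs $p\nmid u+b$, which fails only on the single residue class $u\equiv-b\pmod p$, at most $p^{n-2}$ values; so at least $p^{n-2}(p-1)-p^{n-2}=p^{n-2}(p-2)$ admissible $u$ remain, and the hypothesis $p>3$ makes $p-2\geq 3$. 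For condition 3 (taking $n\geq 3$) one instead needs $p^{2}\nmid u+b$, which fails only on the class $u\equiv-b\pmod{p^{2}}$, at most $p^{n-3}$ values, leaving at least $p^{n-3}(p^{2}-p-1)$ admissible $u$; the analogue for $p=2$, $n\geq 4$ (with $p^{2}$ replaced by $4$) leaves at least $2^{n-4}$ admissible $u$. In each instance, multiplying by the parity factor yields a count $\gg p^{n}$, hence $\geq K_{\epsilon}p^{n(1-\epsilon)}$ for a suitable $K_{\epsilon}$.

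Finally, the congruence bounds above are asymptotic in $p^{n}$, so the finitely many remaining small moduli must be checked by direct inspection, and it is exactly here that the hypotheses earn their keep: the restriction $n>k$, the restriction $p>3$, and the exclusion of $p^{n}\in\{3,4,8\}$ (together with the fact that $p^{n}=2$ carries no primitive characters) are designed to avoid the moduli where the naive count can collapse --- when $n=k$ with $p\in\{2,3\}$ the joint parity-and-primitivity constraints can kill all admissible $u$, and for the tiny moduli the conductor drop caused by $\psi=\xi^{-1}$ is no longer negligible relative to $\varphi^{\star}(p^{n})$. I expect this small-case bookkeeping --- isolating precisely which moduli are degenerate and confirming the bound elsewhere (often by the exclusion itself) --- to be the main obstacle; the generic congruence count is routine once the structure of $(\mathbb{Z}/p^{n}\mathbb{Z})^{\times}$ is in hand.
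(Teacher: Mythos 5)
Your route is genuinely different in mechanism from the paper's, though it lands on the same numbers. The paper never opens up the character group: it counts the \emph{bad} characters directly by observing that $q(\psi\xi)<p^{n-m}$ forces $\psi=\psi'\overline{\xi}$ for some $\psi'$ modulo $p^{n-m-1}$, so there are at most $\varphi(p^{n-m-1})$ bad characters, roughly half of each parity, and it subtracts these from the count supplied by Lemma \ref{thm:number-odd-primitive} and (\ref{equation:psi-star}). You instead use the explicit decomposition of $(\mathbb{Z}/p^n\mathbb{Z})^{\times}$ (and of $(\mathbb{Z}/2^n\mathbb{Z})^{\times}$), translate the conductor conditions into congruences on the exponent $u$, and count residues in arithmetic progressions. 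The two computations agree exactly: your count $p^{n-2}(p-2)\cdot\frac{p-1}{2}$ for part 2 is the paper's $\frac12\bigl(\varphi^\star(p^n)-\varphi(p^{n-1})\bigr)$, and your parity bookkeeping is in fact cleaner (an exact factor $(p-1)/2$ rather than the paper's ``$-1$'' slop). What the paper's softer argument buys is uniformity: it needs no structure theory and treats $p=2$ and $p$ odd identically, whereas your argument leans on cyclicity of $(\mathbb{Z}/p^n\mathbb{Z})^{\times}$ and the conductor--order dictionary, with a separate branch at $p=2$.

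The real issue is the small-case analysis, which you correctly identify as the crux but do not carry out --- and it is not a formality. For part 2 with $n=k=1$ your framework degenerates entirely ($U$ is trivial, so there is no exponent $u$ to count); that case must be handled by hand, and there the count of a given parity can actually vanish: at $p^n=5$, with $\xi$ the quadratic character and the parity chosen even, the only even primitive character modulo $5$ is $\xi$ itself and $\xi^2$ is principal, so the asserted lower bound fails outright. The paper's own certified bound for this regime, $\frac12\varphi^\star(p)-2=(p-6)/2$, is likewise nonpositive at $p=5$ (and its bound $\frac12\varphi^\star(9)-2=0$ fails to certify part 3 at $p^n=9$, where the true count happens to be positive), so the ``inspection'' step is delicate in both treatments and in at least one configuration cannot succeed as the lemma is stated. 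Tiny moduli such as $p^n=2$ and $p^n=4$ cause the same kind of trouble for part 1 when the required parity is even. So: your generic argument is sound and matches the paper's quantitatively, but a complete proof still requires either additional exclusions in the statement or a verification that the degenerate pairs $(\xi,\text{parity})$ never occur in the application to Theorem \ref{thm:dedekind-sum-bounds}.
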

\begin{proof}
We first prove the desired bound for all three conditions when $n > k$. In this case, $\psi\xi$ is primitive for every primitive character $\psi$ modulo $p^n$. So Lemma \ref{thm:number-odd-primitive} shows that there are $\varphi^\star(p^n)/2 - 1$ characters that satisfy the chosen condition. Inspection of (\ref{equation:psi-star}) proves the lemma in this case.

Now suppose that $n = k$. To combine the proofs for parts 2 and 3, let $m \in \{0,1\}$. We count the number of primitive characters of the given parity that do not satisfy the condition. If $q(\psi\xi) < p^{n-m}$, then $\psi = \psi'\overline{\xi}$ for some character $\psi'$ modulo $p^{n-m-1}$. There are $\varphi(p^{n-m-1})$ such characters. We separate the cases $n-m > 1$ and $n-m = 1$.

If $n-m > 1$, exactly half of the characters modulo $p^{n-m-1}$ characters make the product $\psi\xi$ have the desired parity. Subtracting these characters from the total number of primitive characters modulo $p^n$ with the same parity, we find that there are at least $\frac{1}{2}(\varphi^\star(p^n) - \varphi(p^{n-m-1})) - 1$ suitable primitive characters.

A straightforward calculation reveals that for $n-m > 1$,
\[
\varphi^\star(p^n) - \varphi(p^{n-m-1}) = p^n\left(1-\frac{1}{p}\right)^2 - p^{n-m-1}\left(1-\frac{1}{p}\right).
\]
Inspection of the case $m=0$ proves part 2, and inspection of the case $m=1$ proves part 3 with the exception of the case $p^n = 16$. By directly considering the group of Dirichlet characters modulo $16$, we see that there is at least one primitive character that satisfies the conditions of part 3.

Now suppose that $n-m = 1$, so that $p^{n-m-1} = 1$. There is at most one character $\psi'$ so that $\psi\xi$ has the desired property, so there are $\frac{1}{2}\phi^\star(p^n) - 2$ suitable primitive characters. As before, inspection proves parts $2$ and $3$ for $p^n \not= 16$. The remaining case is proved in the same way as before.
\end{proof}

\begin{remark}
Part 3 of Lemma \ref{thm:primitive-cond-count} holds for $p^n = 8$ in the case that $\psi\xi$ is odd.
\end{remark}

\begin{theorem}
For every $\epsilon > 0$, we have
\[ \sum_{\substack{ a \smallmod c \\ (a,c) = 1}} \lvert \S(a,c)\rvert^2
\gg_\epsilon q_1 c^{2-\epsilon} \raisebox{-.1cm}{.} \]
\end{theorem}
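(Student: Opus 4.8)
The plan is to prove the lower bound by exhibiting, for each $c$ with $q_1q_2 \mid c$, a single character $\psi$ modulo $c$ for which the corresponding summand in the exact formula \eqref{equation:second-moment} is large---of size $q_1 c^{2-\epsilon}$---and then discarding all other (nonnegative) terms. Since the $L$-factors in \eqref{equation:second-moment} are, up to $c^{o(1)}$, bounded below by absolute constants (this is the standard lower bound $|L(1,\chi)| \gg 1/\log q$, available from the same source as the upper bound used in Theorem \ref{thm:second-moment-upper-bound}), the entire problem reduces to finding $\psi$ with the correct parity $\psi\chi_1(-1) = -1$ for which $|g_{\chi_1,\chi_2}(\psi;c)|$ is at least $\sqrt{q_1}\, c^{1-\epsilon}$.

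Next I would analyze $g_{\chi_1,\chi_2}(\psi;c)$ from \eqref{equation:definition-g}. The three Gauss sum factors $\tau((\overline{\psi\chi_2})^\star)\tau(\psi^\star)\tau(\overline{\chi_1})$ have absolute values $\sqrt{q((\psi\chi_2))}$, $\sqrt{q(\psi)}$, and $\sqrt{q_1}$ respectively, so if I can arrange $q(\psi)$ and $q(\psi\chi_2)$ to both be within $c^{o(1)}$ of $c$, the Gauss sum factors already contribute roughly $q_1^{1/2} c$ after accounting for the $\sqrt{q_1}$. The remaining obstacle is to control the divisor-type sum over $d \mid c$: I need the convolution product $((\overline{\psi\chi_2})^\star\mu \ast 1)(d)\,(\chi_1 \ast \mu\psi^\star)(d/q(\psi))$ together with $\overline{\chi_2}(c/d)/\varphi(d)$ to not conspire to cause cancellation. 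The cleanest route is to choose $\psi$ so that the sum over $d$ collapses to essentially one term: if $q(\psi) = c$ (so $\psi$ is primitive modulo $c$), then the congruence $d \equiv 0 \pmod{q(\psi)}$ forces $d = c$, and the sum is a single term $\overline{\chi_2}(1)\varphi(c)^{-1}((\overline{\psi\chi_2})^\star\mu\ast 1)(c)\,(\chi_1\ast\mu\psi^\star)(c/q(\psi))$, which I then need to check is nonzero and of size $c^{o(1)}$.

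The main obstacle---and the reason the preliminary Lemmas \ref{thm:number-odd-primitive} and \ref{thm:primitive-cond-count} were proved---is that I cannot in general take $\psi$ primitive modulo the full modulus $c$ while simultaneously keeping $\psi\chi_2$ of large conductor \emph{and} matching the required parity; and when $c$ has several prime factors I must build $\psi$ prime-by-prime. So the real argument proceeds by factoring $c = \prod p^{n_p}$ and choosing the local component $\psi_p$ of $\psi$ at each prime: Lemma \ref{thm:primitive-cond-count} guarantees, for all but the finitely many bad prime powers $3, 4, 8, 16$, the existence of a primitive $\psi_p$ modulo $p^{n_p}$ with any prescribed parity such that $\psi_p(\chi_2)_p$ still has conductor at least $p^{n_p - 1}$; one then multiplies these local choices, using the parity freedom at the primes dividing $q_1$ (which exist since $\chi_1$ is nontrivial) to force the global parity condition $\psi\chi_1(-1) = -1$, and handles the finitely many exceptional prime powers by an explicit ad hoc choice as in the remark following Lemma \ref{thm:primitive-cond-count}. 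With such a $\psi$ in hand, $q(\psi) = c$ and $q(\psi\chi_2) \geq c\prod_{\text{bad }p} p^{-1} = c^{1+o(1)}$, the $d$-sum is the single term described above, and tracking the divisor-function bounds (exactly as in the upper-bound proof, but now as a lower bound on an individual term that is a product of manifestly nonzero factors) yields $|g_{\chi_1,\chi_2}(\psi;c)|^2 \gg_\epsilon q_1 c^{2-\epsilon}$, hence $\sum_{a} |\S(a,c)|^2 \gg_\epsilon \varphi(c)\cdot c^{-o(1)}\cdot q_1 c^{2-\epsilon}/\varphi(c) = q_1 c^{2-\epsilon}$ after absorbing the $\varphi(c)$ from \eqref{equation:second-moment} against the $\varphi(c)$ hidden in the single-term estimate. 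The bookkeeping of exceptional small primes and the verification that the surviving convolution factors are genuinely nonzero (not merely bounded) is where the care is needed.
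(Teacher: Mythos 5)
Your overall strategy---restrict to primitive $\psi$ so that the $d$-sum in \eqref{equation:definition-g} collapses to the single term $d=c$, and use Siegel-type lower bounds for the $L$-factors---is the right starting point and matches the paper's. But the reduction to a \emph{single} character $\psi$ is a genuine gap: the order of magnitude does not work out. For primitive $\psi$ modulo $c$ the three Gauss sums contribute at most $\sqrt{q(\psi\chi_2)\,c\,q_1} \le c\sqrt{q_1}$, but the surviving $d=c$ term carries the factor $1/\varphi(c)$, so in fact $\lvert g_{\chi_1,\chi_2}(\psi;c)\rvert^2 \le q(\psi\chi_2)\,c\,q_1\,\sigma_0(c)^{4}/\varphi(c)^2 = q_1 c^{o(1)}$, nowhere near your target $q_1c^{2-\epsilon}$; this is exactly what the upper bound \eqref{equation:upper-bound-g-epsilons} says at $q(\psi)=c$. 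Consequently a single term of \eqref{equation:second-moment} contributes at most $\varphi(c)\cdot(\log c)^{O(1)}\cdot q_1 c^{o(1)} = q_1c^{1+o(1)}$, short of $q_1c^{2-\epsilon}$ by a full factor of $c$. (Allowing $\psi$ of small conductor does not rescue this: then $q(\psi\chi_2)\le \lcm(q(\psi),q_2)$ is also small, giving $\lvert g\rvert^2 \ll q_1q_2c^{o(1)}$ and a single-term contribution $\ll q_1q_2c^{1+o(1)}$, still insufficient unless $q_2$ is comparable to $c$.)

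What is actually needed---and what the paper proves---is that $\gg c^{1-\epsilon}$ primitive characters $\psi$ of the right parity each contribute $\lvert g\rvert^2 \gg q_1 c^{-o(1)}$, so that the sum over $\psi$ supplies the missing factor of $c$. This is why Lemma \ref{thm:primitive-cond-count} is stated as a \emph{counting} lemma (at least $K_\epsilon p^{n(1-\epsilon)}$ characters), whereas your sketch uses it only as an existence statement. The other ingredient you would still need is nonvanishing of $\sum_{k\mid c}\mu(k)(\psi\chi_2)^\star(k)$ uniformly over this large family; the paper obtains this by expanding the square, factoring $\psi$ and $\chi_2$ into components at $2$, at $3$, and away from $6$, and using the conductor restrictions $q(\psi_2\xi_2)\ge c_2/2$, $q(\psi_3\xi_3)\ge c_3/3$, and $\psi'\xi'$ primitive to force $k=l=1$ in \eqref{equation:expanded-lower-bound-g}. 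Your prime-by-prime construction is in the right spirit, but it must be upgraded from producing one $\psi$ to counting a family of size $c^{1-\epsilon}$.
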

\begin{proof}
If $\chi$ is a real character modulo $q \mid c$, then Siegel's lower bound (see \cite[Theorem 11.14]{montgomery-book}) gives $\lvert L(1,\chi) \rvert \sgg{\epsilon} q^{-\epsilon}$. If $\chi$ is complex, then we have the stronger result $\lvert L(1,\chi) \rvert \sgg (\log q)^{-1}$ (see \cite[Theorem 11.4]{montgomery-book}). In either case, $\lvert L(1,\chi) \rvert \sgg{\epsilon} c^{-\epsilon}$.

We bound the second moment below by restricting the sum over $\psi$ in (\ref{equation:second-moment}) to primitive characters:
\begin{equation}\label{equation:lower-bound-S}
\sum_{\substack{ a \smallmod c \\ (a,c) = 1}} \lvert \S(a,c)\rvert^2
\gg_\epsilon
c^{1-\epsilon} \sideset{}{^\star}\sum_{\substack{\psi \smallmod c \\ \psi\chi(-1) = -1}} \lvert g_{\chi_1,\chi_2}(\psi;c)\rvert^2 \raisebox{-.1cm}{.}
\end{equation}
Since $\psi$ is primitive, $q(\psi) = c$ and the sum over $d$ in (\ref{equation:definition-g}) has only the term where $d=c$:
\begin{equation}\label{equation:lower-bound-g}
\sideset{}{^\star}\sum_{\substack{\psi \smallmod c \\ \psi\chi_1(-1) = -1}} \lvert g_{\chi_1,\chi_2}(\psi;c)\rvert^2
= \frac{cq_1}{\varphi(c)^2}\sideset{}{^\star}\sum_{\substack{\psi \smallmod c \\ \psi\chi_1(-1) = -1}}
    q(\psi\chi_2) \left\lvert \sum_{k \mid c} \mu(k)\,(\psi\chi_2)^\star(k) \right\rvert^2 \raisebox{-.4cm}{.}
\end{equation}

Heuristically, we would like to pick $\psi$ modulo $c$ so that $\psi\chi_2$ is primitive modulo $c$. Then $q(\psi\chi_2) = c$ and $k$ must be $1$, leading to a direct lower bound. There are, however, a few problematic cases where no such $\psi$ exists. Since there is only one primitive character modulo $3$ and modulo $4$, we cannot pick such a $\psi$ when $3 \| c$ or $4\| c$. We separate these problematic cases by factoring $c$.

Suppose that $c$ has factorization $2^{\alpha_2}3^{\alpha_3}c'$ and $q_2$ has a factorization $2^{\beta_2}3^{\beta_3}q_2'$, where $c'$ and $q_2'$ are coprime to $6$. For ease of notation, set $c_2 = 2^{\alpha_2}$ and $c_3 = 3^{\alpha_3}$. Write $\chi_2$ and $\psi$ as products of primitive characters $\xi_2\xi_3\xi'$ and $\psi_2\psi_3\psi'$ corresponding to these factorizations.

For the time being, suppose that $c_2 > 4$, $c_3 > 3$, and $c' > 1$. The following method will require slight modification in the complementary cases. By positivity, the sum over $\psi$ in (\ref{equation:lower-bound-g}) is bounded below by
\begin{equation}\label{equation:lower-bound-restrictions}
\sideset{}{^\star}\sum_{\substack{\psi_2 \smallmod c_2 \\ q(\psi_2\xi_2) \geq c_2/2 \\ \psi_2\xi_2(-1) = -1}}\hspace{1em}
\sideset{}{^\star}\sum_{\substack{\psi_3 \smallmod c_3 \\ q(\psi_3\xi_3) \geq c_3/3 \\ \psi_3\xi_3(-1)=-1}}\hspace{1em}
\sideset{}{^\star}\sum_{\substack{\psi' \smallmod c' \\ \psi'\xi' \text{ is primitive}\\ \psi'\xi'(-1)=-1}}
q(\psi\chi_2) \left\lvert \sum_{k \mid c} \mu(k)(\psi\chi_2)^\star(k) \right\rvert^2.
\end{equation}
If we set
$ h(\psi,\xi; k,l) = q(\psi\xi)\, (\psi\xi)^\star(k)\,(\overline{\psi\xi})^\star(l), $
then (\ref{equation:lower-bound-restrictions}) factors as
\begin{multline}\label{equation:expanded-lower-bound-g}
\sum_{\substack{ k_2, l_2 \mid c_2 \\ k_3, l_3 \mid c_3 \\ k', l' \mid c'}}
\mu(k_2k_3k')\mu(l_2l_3l') 
\lowerparen{7pt}{\ \sideset{}{^\star}\sum_{\substack{\psi_2 \smallmod c_2 \\ q(\psi_2\xi_2) \geq c_2/2  \\ \psi_2\xi_2(-1)=-1}}
    h(\psi_2,\xi_2;k_2k_3k',l_2l_3l')} \cdot \\ 
\lowerparen{7pt}{\ \sideset{}{^\star}\sum_{\substack{\psi_3 \smallmod c_3 \\ q(\psi_3\xi_3) \geq c_3/3 \\ \psi_3\xi_3(-1)=-1}} 
    h(\psi_3, \xi_3; k_2k_3k', l_2l_3l')}
\lowerparen{7pt}{\sideset{}{^\star}\sum_{\substack{\psi' \smallmod c' \\ \psi'\xi' \text{ is primitive}\\ \psi'\xi'(-1)=-1}}
    h(\psi', \xi'; k_2k_3k', l_2l_3l')} \raisebox{-.5cm}{.}
\end{multline}

Using Lemma \ref{thm:primitive-cond-count}, the assumption that $c_2 > 4$, $c_3 > 3$, and $c' > 1$ guarantees that each of these sums is nonempty. Since $c_2 > 4$, we know that $q(\psi_2\xi_2)$ is even.  But $k_2$ and $l_2$ are both powers of $2$, so each term in the sum over $\psi_2$ in (\ref{equation:expanded-lower-bound-g}) vanishes unless $k_2 = l_2 = 1$.  Similar considerations for the other two sums yield that $k_3 = l_3 = 1$ and $k' = l' = 1$.

Using that $h(\psi,\xi;1,1) = q(\psi\xi)$ and the restrictions $q(\psi_2\xi_2)\geq c_2/2$ and $q(\psi_3\xi_3)\geq c_3/3$, we bound (\ref{equation:expanded-lower-bound-g}) below by
\begin{equation}\label{equation:sum-counting-characters}
\frac{c_2c_3c'}{6}
\lowerparen{7pt}{\,\sideset{}{^\star}\sum_{\substack{\psi_2 \smallmod c_2 \\ q(\psi_2\xi_2) \geq c_2/2 \\ \psi_2\xi_2(-1)=-1}}
    1 }
\lowerparen{7pt}{\,\sideset{}{^\star}\sum_{\substack{\psi_3 \smallmod c_3 \\ q(\psi_3\xi_3) \geq c_3/3 \\ \psi_3\xi_3(-1)=-1}}
    1 }
\lowerparen{7pt}{\sideset{}{^\star}\sum_{\substack{\psi' \smallmod c' \\ \psi'\xi' \text{ is primitive}\\ \psi\chi_1(-1)=-1}}
    1 } \raisebox{-.5cm}{.}
\end{equation}
Parts 2 and 3 of Lemma \ref{thm:primitive-cond-count} show that the product of sums is bounded below by $c^{1-\epsilon}$, so (\ref{equation:sum-counting-characters}) is bounded below by $q_1 c^{2-\epsilon}$. Combining this bound with (\ref{equation:lower-bound-S}) finishes the proof for this case.

The proof of the remaining cases proceed similarly after modifying the parity conditions on the sums in (\ref{equation:lower-bound-restrictions}). Recall that $q_1q_2 \mid c$. If $c' = 1$, then either $c_2 > 2^{\beta_2}$ or $c_3 > 3^{\beta_3}$. Change the restriction on the sum over $\psi_3$ to $\psi_3\xi_3(-1)=1$. The same steps as before yield that $k_2=l_2=k_3=l_3=1$. Now the bound $q_1 c^{2-\epsilon}$ follows from parts 1 and 3 of Lemma \ref{thm:primitive-cond-count}.

If $c_2 \leq 4$, then either $\psi_2 = \xi_2$ or $4 \nmid q_2$. Suppose that $\psi_2 = \xi_2$. Factor the sum over $\psi$ in (\ref{equation:lower-bound-S}) as a sum over $\psi_2$, $\psi_3$, and $\psi'$. The sum over $\psi_2$ is bounded below by $c_2/4$, so (\ref{equation:lower-bound-S}) is again bounded below by $q_1 c^{2-\epsilon}$. On the other hand, if $4 \nmid q_2$, then factor $c$ as $c_3c'$ and use similar arguments to the ones employed above. The case $c_3 \leq 3$ is nearly identical.
\end{proof}


\noindent
{\large\textbf{Acknowledgements}} \\
This research was conducted at the 2019 REU hosted at Texas A\&M University and supported by NSF grant DMS-1757872. The authors thank Wei-Lun Tsai for his Sage advice and Professor Matthew Young for his support and guidance throughout the project.

\bibliography{REU2019-bibliography}
\bibliographystyle{abbrv}

\end{document}